\renewcommand{\epsilon}{\varepsilon}
\newcommand{\bbR}{\mathbb{R}}
\newcommand{\GG}{\mathcal{G}}
\newcommand{\half}{{\textstyle \frac{1}{2}}}
\newcommand{\rd}{\mathrm{d}}
\newtheorem{theorem}{Theorem}
\newtheorem{lemma}{Lemma}
\newtheorem{definition}{Definition}
\begin{document}

\noindent
\begin{center}
{\Large \bf Nonlinear stability for steady vortex pairs}
\end{center}
\begin{center}
Geoffrey R. Burton, Helena J. Nussenzveig Lopes\\ and Milton C.
Lopes Filho\\
\end{center}

\bigskip

\begin{center}{\bf Abstract}\end{center}

In this article, we prove nonlinear orbital stability for steadily
translating vortex pairs, a family of nonlinear waves that are exact
solutions of the incompressible, two-dimensional Euler equations.
We use an adaptation of Kelvin's variational principle, maximizing
kinetic energy penalised by a multiple of momentum among mirror-symmetric
isovortical rearrangements.
This formulation has the advantage that the functional to be maximized and
the constraint set are both invariant under the flow of the time-dependent
Euler equations, and this observation is used strongly in the analysis.
Previous work on existence yields a wide class of examples to which our
result applies.

\section{Introduction.}

From a mathematical viewpoint, steady vortex pairs are a class of nonlinear
waves, travelling wave solutions of the incompressible, two dimensional Euler
equations in the full plane.
Two special examples are Lamb's circular vortex-pair, see \cite[p. 245]{Lamb},
and a pair of point vortices with equal magnitude and opposite signs.  

The literature of vortex pairs goes back to the work of Pocklington in
\cite{Pocklington},
with contemporary interest beginning from the work of Norbury, Deem \& Zabusky
and Pierrehumbert,
see \cite{D:Z,Norbury,Pierrehumbert}. The existence (and abundance) of steady vortex pairs
has been rigorously established in two different ways, as a nonlinear eigenvalue problem, see
\cite{Yang91,Norbury} or by optimization in rearrangement classes, see
\cite{GRB:VP,GRB:MCF}. The literature 
on vortex pairs includes asymptotic studies, see \cite{YK94}, numerical studies, see \cite{pullin92} 
and experimental work, see \cite{D:S}. Some analytical results (see \cite{Moffatt}) and numerical evidence, 
\cite{oz82}, suggest orbital stability of steady vortex pairs under appropriate conditions. Still, this 
stability has been an interesting open problem, see \cite{saffman95}.      

Vortex pairs are one instance of a large collection of coherent 
structures found in two dimensional vortex dynamics, for example, 
single vortices, co-rotating pairs and vortex streets. 
In the stability theory of such structures, there has long been a view, growing
out of ideas of Kelvin \cite{Kelvin}, that steady fluid flows representing
extrema of kinetic energy relative to an ``isovortical surface'' are stable;
this viewpoint is exemplified by the formal arguments of Arnol$^\prime$d
\cite{VIA:COND} and informs the variational principles for steady vortex-rings
in three dimensions proposed by Benjamin \cite{TBB}, which provides the impetus
for our work.

The present paper is a piece of real analysis proving a theorem of this type,
the first one that applies to steady vortex pairs. Vortex pairs can be viewed equivalently
as the dynamics of vorticity which is odd with respect to a straight line or as general
vortex dynamics on a half plane, see \cite{L2X01} for a full discussion.
For convenience, we formulate our analysis in terms of steady vortices in a
uniform flow in the half-plane, which corresponds in the full plane to
stability under symmetric perturbations.
Maximizers of a linear combination of the classically preserved quantities
of kinetic energy and impulse over all vorticities that are equimeasurable
rearrangements 
of a fixed non-negative function with bounded support are considered.
The argument is not along the lines envisaged by Arnol$^\prime$d, but 
is analogous to that used in \cite{GRB:stab} for bounded
planar domains; the velocity field of a flow with nearby initial
vorticity is used to convect the steady state and the differences in
energy are estimated.

The vorticity is assumed to be in $L^p \cap L^1$ for some $p>2$ and a
distance between vorticity fields is defined in terms of the $2$-norm and
the impulse.
Because of the invariance under translations parallel to the edge of the
half-plane, maximizers will not be isolated, so our notion of stability is
one of {\em orbital stability}, in which solutions starting close to the
set of maximizers remain close.
These results allow discontinuous vorticity, and the solutions studied are
known not in closed form, but rather via existence theory.
Some stability results in a more symmetric setting, also allowing discontinuous
vorticity, have been given by Marchioro \& Pulvirenti \cite{CM:MP} and
Wan \& Pulvirenti \cite {YHW:MP}.
Precise definitions and formulations of the theorems are given in Section
\ref{Defs}.

Methodologically, a major difficulty is loss of compactness caused by the
unbounded domain of the flow; this is overcome using a
concentration-compactness argument.

\section{Notation and Definitions.}\label{Defs}
We denote by $\Pi$ the half-plane
\[
\Pi = \{ x=(x_1,x_2) \in \bbR^2 \mid x_2>0 \}.
\]
Let $\GG$ denote the inverse for $-\Delta$ in $\Pi$, given by
\begin{equation}
\GG \xi (x) = \int_\Pi G(x,y) \xi (y) \rd y,
\label{eq2}
\end{equation}
whenever this integral converges; here $G$ is the Green's function
given by
\[
G(x,y) = \frac{1}{4\pi} \log \left( \frac{(x_1 -y_1)^2 + (x_2 +
y_2)^2}{(x_1-y_1)^2 + (x_2 - y_2)^2} \right).
\]
It is shown in \cite{GRB:Lamb} that finiteness of
$\|\xi\|_X:=\|\xi\|_2+I(|\xi|)$
is sufficient for convergence of the integral in (\ref{eq2}), where $I$
is defined below.

The kinetic energy due to vorticity $\xi$ is then given by
\[
E(\xi) = \frac{1}{2} \int_\Pi \xi(x) \GG\xi(x) \rd x
\]
and the impulse of linear momentum in the $x_1$-direction is given
by
\[
I(\xi) = \int_\Pi \xi(x_1,x_2) x_2 \rd x .
\]
It is shown in \cite{GRB:Lamb} that $E$ is continuous with respect to
$\|\cdot\|_X$.
We also make use of $\|\xi\|_Y:=\|\xi\|_2+|I(\xi)|$, which is a non-equivalent,
and incomplete, norm on $X$.
The Lebesgue measure, of appropriate dimension, of a measurable set
$\Omega$ is denoted $|\Omega|$.

The evolution of vorticity $\omega$ is governed by the weak form of the
vorticity equation
\begin{equation}
\begin{cases}
\partial_t \omega + \mathrm{div}( \omega u ) = 0, & \\
u = \lambda e_1 + \nabla^\perp \GG\omega,  & (x,t) \in \Pi \times \bbR, \\
\end{cases}
\label{eq1}
\end{equation}
where $\lambda e_1$ represents the velocity of the fluid at infinity,
which is a uniform flow parallel to the $x_1$-axis and
$\nabla^\perp = (-\partial_{x_2},\partial_{x_1})$; the stream function is
then $-\lambda x_2+\GG\omega(x)$.

If $\xi$ is a non-negative Lebesgue integrable function on $\Pi$,
then $\mathcal{R}(\xi)$, the set of {\em rearrangements} of $\xi$ on
$\Pi$, is defined by
\[
\mathcal{R}(\xi) =
\left\{ 0\leq \zeta \in L^1(\Pi) \mbox{ s.t. } \forall \alpha > 0
\; |\{ x:\zeta(x) > \alpha \}| = |\{ x:\xi(x) > \alpha \}|    \; \right\} .
\]
We also define
\[
\mathcal{R}_+(\xi) = \left\{ \zeta 1_\Omega \mid \zeta \in \mathcal{R}(\xi), \;
\Omega \subset \Pi \mbox{ measurable } \right\} .
\]
This is larger than the class $\mathcal{RC}(\xi)$ of
{\em curtailments of rearrangements} defined by Douglas \cite{RJD:UD} as
\[
\mathcal{RC}(\xi) = \left\{ 0 \leq \eta \in L^1(\Pi) \mid
\eta^\Delta = \xi^\Delta 1_{[0,A)}
\mbox{ for some } 0 \leq A \leq \infty \right\},
\]
where $^\Delta$ denotes decreasing rearrangement onto $[0,\infty)$.
We have, from the definitions,
\begin{equation}
\mathcal{R}(\xi) \subset \mathcal{RC}(\xi) \subset
\mathcal{R}_+(\xi) \subset \overline{\mathcal{R}(\xi)^\mathrm{w}} ,
\label{eq14}
\end{equation}
where $\overline{\mathcal{R}(\xi)^\mathrm{w}}$ denotes the closure
of $\mathcal{R}(\xi)$ in the weak topology of $L^2(\Pi)$, this last inclusion
requiring additionally $\xi \in L^2(\Pi)$.
Moreover $\overline{\mathcal{R}(\xi)^\mathrm{w}}$ is convex, see \cite{RJD:UD}.

For example, in the case of a vortex patch, i.e. $\xi = 1_{\Omega}$, where $\Omega$ is a bounded measurable 
subset of the half-plane, we have $\mathcal{R}(\xi)$ is the set of all characteristic functions of sets with the 
same measure as $\Omega$, $\mathcal{RC}(\xi)$ is the set of characteristic functions of sets with measure less than
or equal to the measure of $\Omega$, which is the same as $\mathcal{R}_+(\xi)$. The set 
$\overline{\mathcal{R}(\xi)^\mathrm{w}}$ is much larger, a convex set containing, in particular, functions 
bounded by $1$ which are not piecewise constant.

The {\em (strong) support} $\mbox{suppt}f$ of a real measurable
function $f$ on $\Pi$ is defined to be the set of points of
Lebesgue density 1 for the set $\{ x \in \Pi \mid f(x) \neq
0 \}$\, and is independent of the choice of representative for $f$.

Our stability results are expressed in terms of $L^p$-\emph{regular solutions}
of the vorticity equation, defined below.

\noindent
\begin{definition} \label{LpRegSol}
By an $L^p$\emph{-regular solution} of the vorticity equation we mean 
$\zeta \in L^\infty_\mathrm{loc}([0,\infty), L^1(\Pi)) \cap
L^\infty_\mathrm{loc}([0,\infty), L^p(\Pi))$ satisfying, in the sense of distributions,
\begin{equation} \label{lambdavorteq}
\begin{cases}
\partial_t \zeta + \mathrm{div}( \zeta u ) = 0, & \\
u = \lambda e_1 + \nabla^\perp \GG\zeta,  & (x,t) \in \Pi \times \bbR, \\
\end{cases}
\end{equation}
such that $E(\zeta(t,\cdot))$ and $I(\zeta(t,\cdot))$ are constant.
\end{definition}

Existence of a smooth solution of the initial-boundary-value problem for 
\eqref{lambdavorteq} can be obtained by considering the auxiliary problem
\begin{equation} \label{lambdaVeqn}
\begin{cases}
\partial_t v + (v\cdot\nabla)v + \lambda \partial_{x_1} v = -\nabla p, & \\
\mathrm{div } \,v = 0 & \\
|v| \to 0 & \mbox{ as } |x| \to  \infty, (x,t) \in \Pi \times \bbR\\
\end{cases}
\end{equation}
and taking $\zeta = \mathrm{curl } \,v$. Indeed, taking the curl of \eqref{lambdaVeqn} 
leads to \eqref{lambdavorteq} with $u = v + \lambda e_1$. 

Now, existence of a smooth solution for \eqref{lambdaVeqn}, when the initial vorticity is compactly supported, is a trivial adaptation
of the analogous result for the $2$D incompressible Euler equations, see \cite[Chapter 3]{AJM:ALB},
given that the $L^2$-norm of $v$ is a conserved quantity under evolution by \eqref{lambdaVeqn}. 
Once smooth existence has been established, standard weak convergence methods yield 
existence of weak $L^p$  solutions, see \cite[Theorem 2.1]{LNT00}, again assuming the initial vorticity has compact support. The only remaining issue, to obtain an $L^p$-regular solution for compactly supported initial vorticities, 
is whether $E$ and $I$ are conserved for these weak solutions; this will be the case, easily,
if $p>2$ since, in this case, $v$ is bounded {\it a priori} in $L^r$, $2\leq r \leq \infty$, in 
terms of $L^p$ and $L^1$-norms of vorticity. We note, in particular,
that $L^{\infty}$-regular solutions with compactly supported vorticity are unique, by an
easy adaptation of the celebrated work of Yudovich, see \cite{VIY}. Moreover, these 
$L^{\infty}$-regular solutions are constant along particle paths associated 
with the flow $u$.
Our results do not, however, rely on uniqueness.

Our main result is as follows:

\begin{theorem} \label{thm1} {\bf (Stability Theorem.)}
Let $\zeta_0$ be a non-negative function whose support has finite
positive area $\pi a^2$ ($a>0$) in the half-plane $\Pi$. Suppose
$\zeta_0 \in L^p(\Pi)$, for some $2<p\leq\infty$, and suppose
$\lambda>0$.
Let $\Sigma_\lambda$ denote the set of maximizers of $E-\lambda I$
relative to $\overline{\mathcal{R}(\zeta_0)^\mathrm{w}}$, and
suppose $\emptyset\neq\Sigma_\lambda\subset\mathcal{R}(\zeta_0)$.
Then $\Sigma_\lambda$ is orbitally stable, in the sense that, for
every $\epsilon> 0$ and $A>\pi a^2$, there exists $\delta>0$ such
that, if $\omega(0)\geq 0$ satisfies
$\mbox{dist}_Y(\omega(0),\Sigma_\lambda)<\delta$ and
$|\mbox{suppt}(\omega(0))|<A$, then, for all $t\in\bbR$, we have
$\mbox{dist}_2(\omega(t),\Sigma_\lambda)<\epsilon$, whenever
$\omega(t)$ denotes an $L^p$-regular solution of (\ref{eq1}) with initial data
$\omega(0)$.
\end{theorem}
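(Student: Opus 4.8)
The strategy is a standard compactness-plus-contradiction argument in the spirit of \cite{GRB:stab}, but with the complications of an unbounded domain handled by concentration-compactness. The key structural fact is that both the functional $E-\lambda I$ and the constraint set $\mathcal{R}(\zeta_0)$ are preserved along any $L^p$-regular solution: if $\omega(t)$ is such a solution then $\omega(t)$ stays in $\mathcal{R}(\omega(0))$ (being transported by a measure-preserving flow, at least in the smooth/$L^\infty$ case, and by approximation otherwise), while $E$ and $I$ are constant by definition. So if $\omega(0)$ is close to $\Sigma_\lambda$, it has energy-minus-impulse close to the maximum value $d:=\max\{E-\lambda I\}$, and nearly the right ``mass'' profile; these two properties persist for all time. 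The whole problem therefore reduces to the static assertion: any sequence $\zeta_n\geq 0$ with $|\mathrm{suppt}\,\zeta_n|$ uniformly bounded, with decreasing rearrangements converging suitably to $\zeta_0^\Delta$, and with $(E-\lambda I)(\zeta_n)\to d$, must converge (after translation in $x_1$) in the appropriate sense to $\Sigma_\lambda$.

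First I would set up the variational problem carefully: show $\Sigma_\lambda$ is nonempty and compact in a suitable topology (weak $L^2$ modulo $x_1$-translation), using that $E-\lambda I$ is weakly upper semicontinuous on $\overline{\mathcal{R}(\zeta_0)^{\mathrm w}}$ and that $\lambda>0$ makes the $-\lambda I$ term coercive enough to prevent mass escaping to $x_2=+\infty$; the hypothesis $\Sigma_\lambda\subset\mathcal{R}(\zeta_0)$ is then used to upgrade weak convergence of a maximizing sequence to strong $L^2$ convergence (weak convergence plus convergence of norms, since membership in $\mathcal{R}(\zeta_0)$ fixes the $L^2$ norm). Next, I would prove the crucial compactness lemma: if $\zeta_n\in\mathcal{R}_+(\zeta_0)$ (or close to it in $\mathrm{dist}_Y$) with $(E-\lambda I)(\zeta_n)\to d$ and supports of bounded measure, then up to a subsequence and $x_1$-translations $a_n$, $\zeta_n(\cdot-a_n e_1)\to\sigma$ strongly in $L^2$ for some $\sigma\in\Sigma_\lambda$. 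This is where concentration-compactness enters: apply the concentration-compactness dichotomy to the mass density (or to $|\zeta_n|$), rule out vanishing (it would force $E(\zeta_n)\to 0$ while $-\lambda I$ stays bounded below, contradicting $d>0$, which holds since $E-\lambda I$ can be made positive by scaling), and rule out splitting into two pieces receding from each other (the cross energy term $\iint G\,\zeta_n\zeta_n$ would lose a positive amount, again contradicting maximality — this uses the strict superadditivity of the energy under the impulse constraint). Concentration then gives a translation $a_n$ along which the mass stays tight; weak limits combined with the rearrangement constraint and continuity of $E$ with respect to $\|\cdot\|_X$ finish it.

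The final step is the contradiction argument for orbital stability itself. Suppose not: then there exist $\epsilon>0$, $A>\pi a^2$, initial data $\omega_n(0)\geq 0$ with $\mathrm{dist}_Y(\omega_n(0),\Sigma_\lambda)\to 0$ and $|\mathrm{suppt}\,\omega_n(0)|<A$, and times $t_n$ with $\mathrm{dist}_2(\omega_n(t_n),\Sigma_\lambda)\geq\epsilon$. Since $E$ is $\|\cdot\|_X$-continuous and $I$ is $\|\cdot\|_Y$-continuous, $(E-\lambda I)(\omega_n(0))\to d$; by conservation along $L^p$-regular solutions, $(E-\lambda I)(\omega_n(t_n))=(E-\lambda I)(\omega_n(0))\to d$. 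Also $\omega_n(t_n)$ lies in $\mathcal{R}(\omega_n(0))$ with $\|\omega_n(t_n)\|_2=\|\omega_n(0)\|_2$ bounded and supports of measure $<A$; a short argument (the $\mathrm{dist}_Y$ closeness of $\omega_n(0)$ to $\mathcal{R}(\zeta_0)$ controls its decreasing rearrangement, and rearrangement is preserved by the flow) shows $\omega_n(t_n)$ is an admissible ``almost-maximizing'' sequence for the compactness lemma. Hence a subsequence converges in $L^2$, modulo $x_1$-translation, to some $\sigma\in\Sigma_\lambda$; but $\Sigma_\lambda$ is $x_1$-translation invariant, so $\mathrm{dist}_2(\omega_n(t_n),\Sigma_\lambda)\to 0$, contradicting $\geq\epsilon$. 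The main obstacle is the compactness lemma — specifically, executing the concentration-compactness dichotomy cleanly on the half-plane with the Green's function $G$ (which already encodes the image-charge across $\partial\Pi$) and verifying quantitatively that splitting strictly decreases $E-\lambda I$; the rearrangement bookkeeping needed to pass between $\mathcal{R}(\zeta_0)$, $\mathcal{RC}(\zeta_0)$ and the $\mathrm{dist}_Y$-neighbourhoods via \eqref{eq14} is routine but must be done with care.
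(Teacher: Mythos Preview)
Your overall architecture (contradiction plus a compactness lemma for almost-maximizing sequences) matches the paper, and your treatment of the variational side via concentration-compactness is essentially the content of Theorem~\ref{thm2}. But there is a genuine gap in the final step. You want to apply the compactness lemma directly to $\omega_n(t_n)$, noting that $\omega_n(t_n)\in\mathcal R(\omega_n(0))$ and that $\omega_n(0)$ is $\mathrm{dist}_Y$-close to $\Sigma_\lambda$. The difficulty is that $\omega_n(t_n)$ need not lie in $\mathcal R_+(\zeta_0)$ or even in $\overline{\mathcal R(\zeta_0)^{\mathrm w}}$: closeness in $\|\cdot\|_2+|I(\cdot)|$ does not force the distribution function of $\omega_n(0)$ to be dominated by that of $\zeta_0$ (for instance $\|\omega_n(0)\|_\infty$ could exceed $\|\zeta_0\|_\infty$). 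Your ``short argument'' would therefore require a perturbed version of the compactness lemma valid for sequences whose decreasing rearrangements merely converge to $\zeta_0^\Delta$; this is not a triviality and is not what Theorem~\ref{thm2} provides.

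The paper sidesteps this entirely with a device you do not mention: it transports an element $\zeta_0^n\in\Sigma_\lambda$ (chosen $L^2$-close to $\omega_n(0)$) by the velocity field $u=\lambda e_1+\nabla^\perp\GG\omega_n$ of the perturbed solution. The resulting $\zeta^n(t)$ solves a linear transport equation, so by DiPerna--Lions renormalisation (Lemma~\ref{Renormalized}) one has $\zeta^n(t)\in\mathcal R(\zeta_0)$ exactly, and moreover $\|\zeta^n(t)-\omega_n(t)\|_2=\|\zeta_0^n-\omega_n(0)\|_2$ for all $t$, since the difference is transported by the same divergence-free field. After the cut-off $\widetilde{\ \cdot\ }=\cdot\,1_{\bbR\times(0,Z)}$ (Lemma~\ref{lm3}) one gets a genuine maximizing sequence $\widetilde{\zeta}^n(t_n)\in\mathcal R_+(\zeta_0)$ to which Theorem~\ref{thm2}(ii) applies verbatim, and then one transfers back to $\omega_n(t_n)$ by the preserved $L^2$-distance. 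This ``convect the steady state with the perturbed flow'' idea is the missing ingredient in your plan; without it, you would need to substantially strengthen the compactness lemma.
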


\noindent
Theorem \ref{thm1} is an analogue, for unbounded domains, of
\cite[Theorem 1]{GRB:stab}, and is deduced, by a similar argument, from
the following result:

\begin{theorem} \label{thm2} {\bf (Maximization Theorem.)}
Let non-negative $\zeta_0 \in L^p(\Pi)$, for some $2<p\leq\infty$ and 
suppose $| \mbox{suppt}(\zeta_0) | = \pi a^2$ for some $0<a<\infty$.
Let $0<\lambda<\infty$ and suppose that the set $\Sigma_\lambda$ in
which $E-\lambda I$ attains its supremum $S_\lambda$ relative to
$\overline{\mathcal{R}(\zeta_0)^\mathrm{w}}$ satisfies
$\Sigma_\lambda \subset \mathcal{R}(\zeta_0)$.
Then, in the context of maximizing $E-\lambda I$ relative to
$\overline{\mathcal{R}(\zeta_0)^\mathrm{w}}$, we have:

\noindent{\rm (i)} every maximizing sequence
comprising elements of
$\mathcal{R}_+(\zeta_0)$ contains a subsequence whose elements,
after suitable translations in the $x_1$-direction, converge in
$\|\cdot\|_2$ to an element of $\Sigma_\lambda$;

\noindent{\rm (ii)} every maximizing sequence
$\{\zeta_n\}_{n=1}^\infty$
comprising elements of $\mathcal{R}_+(\zeta_0)$ satisfies
$\mbox{dist}_2(\zeta_n,\Sigma_\lambda)\to0$ as $n\to\infty$;

\noindent{\rm (iii)} $\Sigma_\lambda$ is non-empty;

\noindent{\rm (iv)} each element $\zeta$ of $\Sigma_\lambda$ is
a translate of a function Steiner-symmetric about the $x_2$ axis,
compactly supported and satisfies
$\zeta = \varphi \circ (\GG\zeta - \lambda x_2)$ a.e. in $\Pi$ for some
increasing function $\varphi$.
\end{theorem}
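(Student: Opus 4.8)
The plan is to run a concentration-compactness argument on maximizing sequences drawn from $\mathcal{R}_+(\zeta_0)$, exploiting the translation invariance in the $x_1$-direction to recover compactness after suitable shifts. First I would establish that $S_\lambda$ is finite and that maximizing sequences are bounded in $X$: boundedness of the $L^p$ and $L^1$ norms comes for free since every element of $\mathcal{R}_+(\zeta_0)$ is dominated by a rearrangement of $\zeta_0$, hence bounded in $L^2$ (using $p>2$, so $L^p\cap L^1\subset L^2$), and the impulse $I$ must stay bounded along a maximizing sequence because $E$ is bounded above on bounded subsets of $X$ while $E-\lambda I\to S_\lambda$ and $\lambda>0$. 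In particular the supports need not be bounded a priori, but the impulse control prevents vorticity from escaping to $x_2=+\infty$; the genuine difficulty is escape to $x_2=0$ (where $\GG$ degenerates and $E$ can be lost) and spreading/splitting in the $x_1$-direction.

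Next I would set up the concentration-compactness dichotomy for the "mass" $\int_\Pi \zeta_n$, which is the fixed quantity $\|\zeta_0\|_1$, applied to the translates $\zeta_n(\cdot + (\tau_n,0))$. The three alternatives are vanishing, dichotomy, and compactness. \emph{Vanishing} is excluded because it would force $E(\zeta_n)\to 0$ (the quadratic form $E$ is controlled by a product of an $L^1$-mass that is locally vanishing and a bounded factor — here one uses the structure of $G$ and the uniform $L^p$ bound to show $E(\zeta_n)\to 0$), while $-\lambda I(\zeta_n)\le 0$ for non-negative $\zeta_n$, contradicting $S_\lambda>0$; positivity of $S_\lambda$ itself follows by testing with a suitable translate of $\zeta_0$ pushed far from the boundary, where $E$ dominates $\lambda I$ for appropriate scaling, or more simply by a direct computation showing $\sup(E-\lambda I)>0$. \emph{Dichotomy} is the main obstacle and must be ruled out by a strict-superadditivity (strict subadditivity of the "penalty") argument: if the sequence splits into two pieces carrying masses $m$ and $\|\zeta_0\|_1-m$ that drift infinitely far apart, the cross term in $E$ vanishes and one obtains $S_\lambda \le S_\lambda^{(m)} + S_\lambda^{(\|\zeta_0\|_1-m)}$ where $S_\lambda^{(m)}$ is the supremum over the sub-rearrangement class with mass $m$; I would then show this inequality is strict, contradicting the definition of $S_\lambda$, by using the homogeneity/monotonicity of the problem in the mass parameter together with the constraint $\Sigma_\lambda\subset\mathcal{R}(\zeta_0)$ (which says no mass can be "lost" at the optimum). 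Some care is needed because the pieces may also drift toward the boundary $x_2=0$; one controls this using the impulse bound and the fact that $\GG$ and $G$ decay as either argument approaches the edge.

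Once dichotomy and vanishing are excluded, compactness gives translates $\zeta_n(\cdot+(\tau_n,0))$ that are tight in $L^1$ and, being bounded in $L^p$, converge weakly in $L^2$ (along a subsequence) to some $\zeta\in\overline{\mathcal{R}(\zeta_0)^{\mathrm w}}$. Tightness plus the continuity of $E$ with respect to $\|\cdot\|_X$ — together with weak lower semicontinuity issues handled via the compact pieces of $\GG$ — upgrades this to $E(\zeta_n)\to E(\zeta)$ and $I(\zeta_n)\to I(\zeta)$, so $\zeta$ is a maximizer: $\zeta\in\Sigma_\lambda$, giving (iii), and by hypothesis $\zeta\in\mathcal{R}(\zeta_0)$. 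Since $\zeta_n$ converges weakly in $L^2$ to $\zeta\in\mathcal{R}(\zeta_0)$ with $\|\zeta_n\|_2\le\|\zeta_0\|_2=\|\zeta\|_2$ and $\liminf\|\zeta_n\|_2\ge\|\zeta\|_2$, weak convergence plus norm convergence yields \emph{strong} $L^2$ convergence, which is (i); statement (ii) is the routine contrapositive consequence (if $\mathrm{dist}_2(\zeta_n,\Sigma_\lambda)\not\to0$ pass to a subsequence bounded away from $\Sigma_\lambda$ and apply (i) to reach a contradiction). Finally, for (iv) I would use the standard variational characterization of maximizers over rearrangement classes: a maximizer $\zeta$ of the linear-plus-quadratic functional satisfies $\zeta=\varphi\circ(\GG\zeta-\lambda x_2)$ a.e.\ for some increasing $\varphi$ by the theory relating rearrangement-class optimizers to their "potentials" (the function $\GG\zeta-\lambda x_2$ acts as a Lagrange-multiplier-type profile), compact support follows since $\GG\zeta-\lambda x_2\to-\infty$ as $x_2\to\infty$ forcing $\zeta$ to vanish outside a bounded set, and Steiner symmetry about a vertical line follows because moving mass to be symmetric-decreasing in $x_1$ strictly increases $E$ unless $\zeta$ is already so arranged (a polarization/rearrangement inequality argument, using that $G$ is a decreasing function of $|x_1-y_1|$).
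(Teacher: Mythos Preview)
Your overall architecture matches the paper's: concentration--compactness on a maximizing sequence in $\mathcal{R}_+(\zeta_0)$, rule out vanishing and dichotomy, then upgrade weak to strong $L^2$ convergence using the hypothesis $\Sigma_\lambda\subset\mathcal{R}(\zeta_0)$, with (ii)--(iv) as consequences. The treatment of compactness, (ii), (iii) and the symmetry/functional-relation parts of (iv) are essentially what the paper does.

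The genuine gap is in your exclusion of dichotomy. You invoke a Lions-style strict subadditivity $S_\lambda^{(m)}+S_\lambda^{(\|\zeta_0\|_1-m)}<S_\lambda$ ``by homogeneity/monotonicity in the mass parameter''. In a rearrangement-constrained problem there is no clean mass scaling: the admissible class $\mathcal{R}_+(\zeta_0)$ is not parametrised by a single mass, the two dichotomy pieces carry \emph{profile} information (each is a curtailment whose distribution function depends on the splitting), and there is no homogeneity of $E-\lambda I$ to exploit. So the subadditivity inequality you need is neither well-posed as stated nor proved. The paper's route is different and constructive: it Steiner-symmetrises each piece $\zeta_n^{(i)}$ about the $x_2$-axis (raising $E-\lambda I$), then truncates to the set where its own stream function is positive (again raising $E-\lambda I$ by Lemma~\ref{lm3}(ii)), which, by Lemma~\ref{lm6}(ii), confines each piece to a fixed rectangle $(-k,k)\times(0,Z)$; translating the two pieces to disjoint rectangles and passing to weak limits $\overline{\zeta^{(1)}},\overline{\zeta^{(2)}}$ yields $\overline{\zeta}=\overline{\zeta^{(1)}}+\overline{\zeta^{(2)}}\in\overline{\mathcal{R}(\zeta_0)^{\mathrm w}}$ with $(E-\lambda I)(\overline{\zeta})\ge S_\lambda$. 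The hypothesis $\Sigma_\lambda\subset\mathcal{R}(\zeta_0)$ now forces $\|\overline{\zeta}\|_2^2=\|\zeta_0\|_2^2$, hence $\|\overline{\zeta^{(i)}}\|_2^2$ equals its limiting value $\alpha$ or $\beta$, so both pieces are nonzero; then the strictly positive cross term $\int\overline{\zeta^{(1)}}\GG\overline{\zeta^{(2)}}$ gives $(E-\lambda I)(\overline{\zeta})>S_\lambda$, a contradiction. The Steiner-symmetrisation step is the missing idea in your proposal: it is what produces spatial compactness of the individual pieces so that a single competitor in the weak closure can be assembled.

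A smaller point: both for vanishing and for the compactness case you should first replace $\zeta_n$ by $\zeta_n 1_{\bbR\times(0,Z)}$ using Lemma~\ref{lm3}(ii); the impulse bound alone does not give this, and without the strip confinement the Green's-function estimates needed to show $E(\zeta_n)\to 0$ under vanishing (and the $E$-continuity in the compactness step) are not available. Likewise, for compact support in (iv) the bound on $x_2$ is immediate but the bound on $x_1$ uses Steiner symmetry together with Lemma~\ref{lm6}(ii); your sketch states these ingredients separately but does not connect them.
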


\noindent {\bf Remarks.} The hypotheses of Theorem \ref{thm1}
exclude $0$ as a maximizer, and therefore the supremum is positive.

We also show in Lemma \ref{lm7} that given non-negative
$\zeta_0 \in L^p(\Pi)$, $p>2$, having compact support, there exists
$\Lambda>0$ such that if $0<\lambda<\Lambda$ then
$\emptyset \neq \Sigma_\lambda \subset \mathcal{R}(\zeta_0)$, so that the
hypotheses of Theorem \ref{thm1} are satisfied.
Lemma \ref{lm7} incidentally provides a mild improvement on the existence
result \cite[Theorem 16(i)]{GRB:VP}.

It also follows from Theorem \ref{thm2}(i) that $\Sigma_\lambda$ comprises a
compact set of functions in $L^2(\Pi)$ together with their $x_1$-translations.

Theorem \ref{thm2}(iv) proves that the maximizers have compact support and
therefore fit into the context of \cite[Theorem 16(i)]{GRB:VP}, which yields
the conclusion, repeated above, that $\psi:=\GG\zeta -\lambda x_2$ satisfies
an equation
\[
- \Delta \psi = \varphi \circ \psi \mbox{ in } \Pi
\]
which is the classical equation governing stream functions of steady planar
ideal fluid flows, so that elements of $\Sigma_\lambda$ do indeed represent
steady vortices of finite extent.

It has been noted above that Theorem \ref{thm1} applies to a wide class of
examples.
It is unfortunate that this class does not include Lamb's semicircular vortex,
because $0$ is a maximizer, relative to the weak closure of the rearrangements,
of the relevant variational problem; see \cite{GRB:Lamb}.
Lamb's vortex is a particularly interesting example because it is a
closed-form solution, and the maximizers, relative to the class of
rearrangements, are known from \cite{GRB:Lamb} to be the translates parallel
to the $x_1$-axis of a single function.

\section{Concentration-compactness and Theorem \ref{thm2}.}
Here we present a sequence of Lemmas leading to the proof of Theorem \ref{thm2}.
The first is a slight reformulation of Lions \cite[Lemma 1.1]{PLL:CC1},
and we omit the proof:

\begin{lemma} \label{lm1} {\bf (Concentration-Compactness.)}
Let $\{\xi_n\}_{n=1}^\infty$ be a sequence of non-negative elements
of $L^1(\bbR^N)$ and suppose
\[
\limsup_{n\to\infty} \int_{R^N} \xi_n  = \mu
\]
where $0\leq\mu<\infty$. Then, after passing to a subsequence, one of
the following holds:

\noindent {\rm (i) (Compactness)} There exists a sequence
$\{y_n\}_{n=1}^\infty$ in $\bbR^N$ such that
$\forall\epsilon>0$ $\exists R>0$ such that
\[
\forall n \quad \int_{y_n+B_R} \xi_n \geq \mu - \epsilon \; ;
\]
{\rm (ii) (Vanishing)}
\[
\forall R>0 \quad \lim_{n\to\infty} \sup_{y \in \bbR^N} \int_{y+B_R} \xi_n = 0 \; ;
\]
{\rm (iii) (Dichotomy)} There exists $\alpha$, $0<\alpha<\mu$, such
that for all $\epsilon>0$ and all large $n$, there exist
$\xi_n^{(1)}=1_{\Omega_n^{(1)}}\xi_n$ and
$\xi_n^{(2)}=1_{\Omega_n^{(2)}}\xi_n$, for some
disjoint measurable $\Omega_n^{(1)},\Omega_n^{(2)}\subset\bbR^N$, such
that, for all $n$,
\[
0 \leq \int_{R^N} \xi - (\xi_n^{(1)} + \xi_n^{(2)}) < \epsilon
\]
\[
-\epsilon < \int_{R^N} \xi_n^{(1)} - \alpha < \epsilon
\]
\[
-\epsilon < \int_{R^N} \xi_n^{(2)} - (\mu-\alpha) < \epsilon
\]
\[
\mbox{dist}(\Omega_n^{(1)},\Omega_n^{(2)}) \to \infty
\mbox{ as } n\to\infty \; .
\]
\end{lemma}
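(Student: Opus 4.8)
The statement to be proved is the Concentration-Compactness Lemma (Lemma \ref{lm1}), which the excerpt explicitly flags as "a slight reformulation of Lions \cite[Lemma 1.1]{PLL:CC1}" whose proof is omitted. Accordingly, the plan is to recall Lions' dichotomy argument and indicate where the present reformulation differs — principally in that the dichotomy alternative is stated here with \emph{disjoint measurable sets} $\Omega_n^{(1)},\Omega_n^{(2)}$ and the splitting $\xi_n^{(j)} = 1_{\Omega_n^{(j)}}\xi_n$ realised by multiplication by characteristic functions (rather than by smooth cut-offs), which is exactly what is needed downstream to stay inside the class $\mathcal{R}_+(\zeta_0)$.

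First I would introduce the concentration function $Q_n(R) = \sup_{y\in\bbR^N}\int_{y+B_R}\xi_n$, which is nondecreasing in $R$ and bounded by $\int_{\bbR^N}\xi_n$. Passing to a subsequence, $Q_n$ converges pointwise (Helly's selection theorem) to a nondecreasing function $Q$ on $(0,\infty)$ with limit $\alpha := \lim_{R\to\infty} Q(R) \in [0,\mu]$. The three cases of the trichotomy correspond to $\alpha = \mu$ (Compactness), $\alpha = 0$ (Vanishing), and $0 < \alpha < \mu$ (Dichotomy). In the first case, for each $\epsilon>0$ pick $R$ with $Q(R) > \mu - \epsilon$, then choose $y_n$ nearly realising the supremum $Q_n(R)$; a diagonal argument over a sequence $\epsilon_k \to 0$ (enlarging $R$ as needed, and noting the centres can be reconciled because the masses overlap) produces a single sequence $y_n$ with the stated property. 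The second case is immediate from $\alpha = 0$.

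The main work, as always with this lemma, is the Dichotomy case, and that is the step I expect to be the principal obstacle — specifically, arranging the split to land exactly on characteristic-function restrictions. Given $\epsilon>0$, choose $R$ large with $Q(R) > \alpha - \epsilon$; then for all large $n$ there is $y_n$ with $\int_{y_n+B_R}\xi_n > \alpha - \epsilon$, while $\int_{y_n + B_{R'}}\xi_n < \alpha + \epsilon$ for a suitable $R' > R$ (using that $Q(R')$ is close to $\alpha$ and a further subsequence). One then sets $\Omega_n^{(1)} = (y_n + B_{\rho_n})\setminus N_n$ and $\Omega_n^{(2)} = \bbR^N \setminus (y_n + B_{\rho_n'})$ for radii $\rho_n, \rho_n'$ chosen in the annulus $[R, R']$ (depending on $n$, going to infinity with a suitable choice of the $\epsilon$-sequence) so that the annular mass $\int_{(y_n+B_{\rho_n'})\setminus(y_n+B_{\rho_n})}\xi_n$ is small; here $N_n$ is a null set absorbing the (at most measure-zero) sphere boundaries, so that $\Omega_n^{(1)}$ and $\Omega_n^{(2)}$ are genuinely disjoint and measurable. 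The three mass estimates then follow from the choice of $R, R'$ and the annular smallness, and $\mathrm{dist}(\Omega_n^{(1)},\Omega_n^{(2)}) = \rho_n' - \rho_n \to \infty$ by construction. Finally one extracts a single subsequence valid for all $\epsilon$ simultaneously by the usual diagonalisation over $\epsilon_k \to 0$.

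Since this is verbatim the reformulation the authors attribute to Lions and declare they omit, the cleanest course in the paper itself is simply to cite \cite{PLL:CC1} and remark that the only change is the realisation of the dichotomy split via characteristic functions on disjoint sets, which is transparent from the construction above; no genuinely new ideas are required beyond Lions' original argument.
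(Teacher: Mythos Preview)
Your proposal is correct and matches the paper's treatment exactly: the authors omit the proof entirely, citing Lions \cite[Lemma 1.1]{PLL:CC1}, and your sketch is precisely the standard Lions argument together with the observation (which you make explicitly) that the only reformulation needed here is realising the dichotomy split by characteristic functions of disjoint sets rather than smooth cut-offs. Your final paragraph recommending simply citing \cite{PLL:CC1} is literally what the paper does.
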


\noindent{\bf Remarks.}
Notice that if $\mu=0$ then the whole sequence has the Vanishing Property.

We will apply this result to maximizing sequences of $E-\lambda I$
in $\mathcal{R}_+(\xi)$, for suitable $\xi$ and $\lambda$. In this
connection, it should be noted that if $\xi \in L^2(\Pi)$ is
non-negative and has compact support then, for sequences in
$\mathcal{R}_+(\xi)$, it follows from H\"{o}lder's inequality and
equimeasurability that convergence in $\| \cdot \|_1$ and $\| \cdot
\|_2$ are equivalent.

The following alternative form of the Green's function will be useful:
\begin{equation}
G(x,y) = \frac{1}{4\pi}\log\left(
1+\frac{4x_2y_2}{(x_1-y_1)^2+(x_2-y_2)^2} \right).
\label{eq3}
\end{equation}

\bigskip

The following estimates are derived in Burton \cite[Lemmas 1 \& 5]{GRB:VP}:

\begin{lemma} \label{lm2}
Given $A>0$, we can choose positive numbers $b,c,d,e$, and $0<\beta<1$,
depending only on $A$, such that if $\xi \in L^2(\Pi)$ satisfies
$|\mbox{suppt}(\xi)| \leq A$ then we have\\
{\rm (i)}
$| \GG \xi (x_1,x_2) | \leq \| \xi \|_2 (b+c\log x_2), \quad  x_2>e$;\\
{\rm (ii)} $|\GG\xi(x_1,x_2)|\leq d|x_2|^\beta\|\xi\|_2, \quad 0<x_2<e$.
\end{lemma}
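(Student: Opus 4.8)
Since $G\geq 0$ we have $|\GG\xi(x)| \leq \GG|\xi|(x)$, and $|\xi|$ has the same support and the same $2$-norm as $\xi$, so it is enough to prove both estimates for non-negative $\xi$. The whole argument rests on a pointwise bound for $G(x,y)$, derived from the form (\ref{eq3}) using the elementary inequalities $\log(1+t)\leq t$ and $\log(1+t)\leq\alpha^{-1}t^\alpha$ for $0<\alpha<1$ (both checked by differentiation), followed by integration against $\xi$ via the Cauchy--Schwarz inequality. The support hypothesis is used only through the remark that, for a radially non-increasing kernel $k\geq 0$, the quantity $\int_{\mbox{suppt}(\xi)} k(|x-y|)\,\rd y$ is largest, among all supports of area at most $A$, when the support is the disc of area $A$ centred at $x$; this turns $|\mbox{suppt}(\xi)|\leq A$ into explicit constants depending only on $A$.

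For part (ii) I would fix $\beta=\tfrac14$ (any value in $(0,\tfrac12)$ serves) and require only $e\leq 1$. Applying $\log(1+t)\leq\beta^{-1}t^\beta$ to $t=4x_2y_2/|x-y|^2$ gives $G(x,y)\leq (4\pi\beta)^{-1}(4x_2)^\beta\,y_2^\beta|x-y|^{-2\beta}$. I would split $\mbox{suppt}(\xi)$ into the region $\{y_2\leq 2x_2\}$, where $y_2^\beta\leq(2x_2)^\beta$ and the residual kernel $|x-y|^{-4\beta}$ is locally integrable because $4\beta<2$, and the region $\{y_2>2x_2\}$, where $|x-y|\geq y_2-x_2>x_2$ forces $y_2<2|x-y|$, so that $y_2^{2\beta}|x-y|^{-4\beta}\leq 2^{2\beta}|x-y|^{-2\beta}$ is again locally integrable. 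Cauchy--Schwarz in $y$ together with the disc bound then yields, on the two regions, estimates of the form $C(A)\,x_2^{2\beta}\|\xi\|_2$ and $C(A)\,x_2^{\beta}\|\xi\|_2$; since $x_2<e\leq 1$ the first is dominated by the second, and (ii) follows with $d=d(A)$ and this $\beta$.

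For part (i) I would take $e=1$ and split $\mbox{suppt}(\xi)$ into $\{|x-y|<1\}$ and $\{|x-y|\geq 1\}$. On the first region $y_2\leq x_2+|x-y|<2x_2$, so $4x_2y_2/|x-y|^2\leq 8x_2^2/|x-y|^2$, and since $|x-y|\leq 1\leq x_2$ one has $1+8x_2^2/|x-y|^2\leq 9x_2^2/|x-y|^2$; hence $G(x,y)\leq (4\pi)^{-1}(\log 9+2\log x_2)+(2\pi)^{-1}\log(1/|x-y|)$, whose singular term is square-integrable over the unit disc. On the second region I would distinguish $y_2\leq 2x_2$, giving $4x_2y_2/|x-y|^2\leq 8x_2^2$, from $y_2>2x_2$, where $y_2<2|x-y|$ and $|x-y|\geq 1$ give $4x_2y_2/|x-y|^2\leq 8x_2$; either way $G(x,y)\leq(4\pi)^{-1}(\log 9+2\log x_2)$ because $x_2\geq 1$. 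Integrating against $\xi$, with $\|\xi\|_1\leq A^{1/2}\|\xi\|_2$ for the bounded part and Cauchy--Schwarz plus the disc bound for the logarithmic part, gives the bound $\|\xi\|_2(b+c\log x_2)$ with $b=b(A)$, $c=c(A)$.

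The elementary inequalities and the disc-rearrangement estimates for the kernel integrals are routine. The point that needs care is the decay exponent in (ii): one must show that the part of $\mbox{suppt}(\xi)$ lying far from the boundary on the scale of $x_2$ still contributes only $O(x_2^\beta)\|\xi\|_2$, which is invisible to the crude estimate that replaces the support integral by its area, and instead requires retaining an integrable power of $|x-y|$ in the kernel. This is also exactly what forces $\beta<\tfrac12$, so that the more singular kernel $|x-y|^{-4\beta}$ coming from the near-boundary region stays locally integrable.
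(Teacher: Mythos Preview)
The paper does not prove this lemma itself; it simply quotes the estimates from Burton \cite[Lemmas~1 \& 5]{GRB:VP}. Your self-contained argument is correct: the pointwise bounds on $G$ obtained from formula~(\ref{eq3}) via $\log(1+t)\leq t$ and $\log(1+t)\leq\beta^{-1}t^\beta$, combined with Cauchy--Schwarz and the bathtub bound $\int_{\mathrm{suppt}(\xi)}k(|x-y|)\,\rd y\leq\int_{B_{\sqrt{A/\pi}}(x)}k(|x-y|)\,\rd y$ for radially decreasing $k$, give exactly the stated inequalities, and your restriction $\beta<\tfrac12$ is what keeps the squared kernels $|x-y|^{-4\beta}$ and $|x-y|^{-2\beta}$ locally integrable.
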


\begin{lemma} \label{lm6}
{\rm (i)}
Given $A>0$ and $2<p<\infty$ we can choose a positive number $N$ such that
$|\nabla \GG \xi (x_1,x_2)| \leq N \| \xi \|_p$ for all $\xi \in L^p(\Pi)$
vanishing outside a set of area $A$.\\
{\rm (ii)}
Given $A>0$, $Z>0$ and $2<p<\infty$, we can choose a positive number $f$
such that if $\xi \in L^p(\Pi)$  is Steiner-symmetric about the $x_2$-axis,
$\xi$ satisfies $|\mbox{suppt}(\xi)| \leq A$ and $\xi(x_1,x_2)=0$ for $x_2>Z$
then we have
\[
|\GG\xi(x_1,x_2)| \leq f \| \xi \|_p x_2 \min\{1,|x_1|^{-1/2p}\}.
\]
\end{lemma}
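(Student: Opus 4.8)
The plan is to treat the two parts separately: part (i) is a routine singular-integral estimate, while part (ii) rests on an exact identity for $\int_\bbR G\,\rd y_1$ together with the Steiner symmetry. (Both estimates are also contained in \cite[Lemmas~1 \& 5]{GRB:VP}.)

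For (i) I would differentiate under the integral sign, $\nabla\GG\xi(x)=\int_\Pi\nabla_xG(x,y)\xi(y)\,\rd y$. Writing $\bar y=(y_1,-y_2)$ gives $G(x,y)=\frac1{2\pi}(\log|x-\bar y|-\log|x-y|)$, so $|\nabla_xG(x,y)|\le\frac1{2\pi}(|x-\bar y|^{-1}+|x-y|^{-1})\le\pi^{-1}|x-y|^{-1}$ for $x,y\in\Pi$, using $|x-\bar y|\ge|x-y|$. With $S=\mbox{suppt}\,\xi$ of area at most $A$ and $p'=p/(p-1)\in(1,2)$, H\"older's inequality gives $|\nabla\GG\xi(x)|\le\pi^{-1}\|\xi\|_p\bigl(\int_S|x-y|^{-p'}\,\rd y\bigr)^{1/p'}$. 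Since $p'<2$ the kernel is locally integrable, and among measurable sets of area $\le A$ the integral $\int_S|x-y|^{-p'}\,\rd y$ is largest for the disc of area $A$ centred at $x$, so it is at most a constant depending only on $A$ and $p$; this gives (i).

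For (ii) the starting point is the identity
\[
\int_\bbR G\bigl((x_1,x_2),(y_1,y_2)\bigr)\,\rd y_1=\min\{x_2,y_2\},
\]
which follows from (\ref{eq3}) by the substitution $s=y_1-x_1$ and the formula $\int_\bbR[\log(s^2+a^2)-\log(s^2+b^2)]\,\rd s=2\pi(|a|-|b|)$, together with the pointwise bound $G(x,y)\le x_2y_2(\pi|x-y|^2)^{-1}$ coming from $\log(1+t)\le t$. I would first handle $|x_1|\ge1$ by splitting $S=S_1\cup S_2$ at $|y_1|=|x_1|/2$. On $S_1$ one has $|x-y|\ge|x_1-y_1|\ge|x_1|/2$, so $G(x,y)\le x_2Z(\pi|x-y|^2)^{-1}$, and H\"older together with $\int_{|z|\ge|x_1|/2}|z|^{-2p'}\,\rd z\le c_p|x_1|^{2-2p'}$ bounds the $S_1$-contribution by $C(Z,p)\|\xi\|_p\,x_2|x_1|^{-2/p}$. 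On $S_2$, Steiner symmetry about the $x_2$-axis means the support has slices $(-\ell(y_2),\ell(y_2))$ with $\int_0^\infty 2\ell(y_2)\,\rd y_2\le A$, whence $|\{\ell>|x_1|/2\}|\le A/|x_1|$; moreover $\xi$ is decreasing in $|y_1|$, so $\xi1_{S_2}(y)\le g(y_2):=\xi(|x_1|/2,y_2)$, a function supported in $\{\ell>|x_1|/2\}$ with $\|g\|_p\le|x_1|^{-1/p}\|\xi\|_p$. Using the identity, the $S_2$-contribution is at most $\int_0^Z\min\{x_2,y_2\}\,g(y_2)\,\rd y_2\le x_2\|g\|_1\le x_2(A/|x_1|)^{1/p'}\|g\|_p\le A^{1/p'}\|\xi\|_p\,x_2|x_1|^{-1}$. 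Since $2/p\ge1/2p$ and $1\ge1/2p$, adding the two contributions gives $|\GG\xi(x)|\le f\|\xi\|_p\,x_2|x_1|^{-1/2p}$ in this range.

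For $|x_1|<1$ it remains to show $|\GG\xi(x)|\le f\|\xi\|_p\,x_2$. Here I would reduce to the axis: since $y_1\mapsto G((x_1,x_2),(y_1,y_2))$ is symmetric and decreasing in $|y_1-x_1|$ while $\xi(\cdot,y_2)$ is symmetric and decreasing in $|y_1|$, the Hardy--Littlewood rearrangement inequality gives $\GG\xi(x_1,x_2)\le\GG\xi(0,x_2)$. Everything then reduces to the estimate $\GG\xi(0,x_2)\le f\|\xi\|_p\,x_2$, i.e.\ to bounding $\int_0^Z\int_\bbR K(y_1)\xi(y_1,y_2)\,\rd y_1\,\rd y_2$ with $K(y_1)=\frac1{4\pi}\log\!\bigl(1+4x_2y_2/(y_1^2+(x_2-y_2)^2)\bigr)$, using $\int_\bbR K=\min\{x_2,y_2\}$, $\|K\|_{L^{p'}(\bbR)}\le c_p(x_2y_2)^{1/(2p')}$, a layer-cake decomposition of the profile $\xi(\cdot,y_2)$, and H\"older in $y_2\in(0,Z]$. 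This last step is the main obstacle: the bare $L^{p'}$-bound on $K$ alone only yields $\GG\xi(0,x_2)\le C_{Z,p}\|\xi\|_p\,x_2^{1/(2p')}$ with $1/(2p')<1$, so one must genuinely exploit that $G$ has a \emph{logarithmic} rather than Riesz-type singularity on the diagonal --- equivalently that $\int_\bbR G\,\rd y_1$ is finite and equals $\min\{x_2,y_2\}$ --- together with the Steiner symmetry, in order to upgrade the power of $x_2$ from $1/(2p')$ to $1$. Once $\GG\xi(0,x_2)\le f\|\xi\|_p\,x_2$ is in hand, part (ii) follows.
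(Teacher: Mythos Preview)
The paper does not prove this lemma; it imports both estimates from \cite[Lemmas~1 \&~5]{GRB:VP}, so there is no in-paper argument to compare against. Your sketch is therefore an independent attempt, and most of it is sound: part~(i) is the standard argument, and your treatment of the regime $|x_1|\ge1$ in part~(ii) is correct---the $S_1$/$S_2$ split, the use of $\log(1+t)\le t$ on $S_1$, and the identity $\int_\bbR G\,\rd y_1=\min\{x_2,y_2\}$ together with the Steiner bound $\xi\,1_{S_2}\le g(y_2)$ on $S_2$ all work as you describe.

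The ``main obstacle'' you flag at the end is, however, not an obstacle at all---you have overlooked the simplest route. Since $G(x,y)=0$ whenever $x_2=0$, we have $\GG\xi(x_1,0)=0$ for every $x_1$, and the mean value theorem in the $x_2$-variable gives
\[
|\GG\xi(x_1,x_2)|=|\GG\xi(x_1,x_2)-\GG\xi(x_1,0)|\le x_2\sup_{0<t<x_2}|\partial_{x_2}\GG\xi(x_1,t)|\le N\|\xi\|_p\,x_2
\]
directly from part~(i). This holds for \emph{every} $x\in\Pi$ and every $\xi$ supported on a set of area $\le A$: no Steiner symmetry, no height restriction, no Hardy--Littlewood reduction to $x_1=0$, no analysis of $\|K\|_{L^{p'}(\bbR)}$ is needed. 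The case $|x_1|<1$ is therefore a one-line consequence of (i), and the extra hypotheses in (ii) are there solely to manufacture the decay factor $|x_1|^{-1/2p}$, which your $|x_1|\ge1$ argument already delivers. Drop the last paragraph of your sketch and replace it with the mean value inequality above, and the proof is complete.
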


\medskip

\noindent
The following Lemma shows that $E(\zeta)<\infty$ provided that $\|\zeta\|_1$,
$\|\zeta\|_2$ and $I(|\zeta|)$ are all finite.
If $I(\zeta)=\infty$ we adopt the convention
$E(\zeta)-\lambda I(\zeta)=-\infty$ for $\lambda>0$.

\begin{lemma}
\label{lm9} There is a constant $C>0$ such that
\[
\|\GG\zeta\|_\infty \leq C(\|\zeta\|_1 + \|\zeta\|_2 + I(|\zeta|))
\]
for all measurable functions $\zeta$, provided that the right-hand side
is finite.
\end{lemma}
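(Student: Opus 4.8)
The plan is to estimate $|\GG\zeta(x)| \le \int_\Pi G(x,y)\,|\zeta(y)|\,\rd y$ (legitimate because $G\ge 0$, as is clear from the representation (\ref{eq3})) uniformly in $x\in\Pi$, by splitting the $y$-integration according to the size of the scale-invariant ratio $\rho(x,y):=4x_2y_2/|x-y|^2$ occurring in (\ref{eq3}). On the region $\{y\in\Pi:\rho(x,y)\le 1\}$ one has $G(x,y)\le\tfrac{1}{4\pi}\log 2$, so that part contributes at most $\tfrac{\log 2}{4\pi}\,\|\zeta\|_1$. The essential region is $D_x:=\{y\in\Pi:\rho(x,y)>1\}$, that is, $|x-y|^2<4x_2y_2$.

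The key elementary fact, which I would record first, is that on $D_x$ the heights $x_2$ and $y_2$ are comparable: from $(x_2-y_2)^2\le|x-y|^2<4x_2y_2$ one obtains $(3-2\sqrt{2})\,y_2<x_2<(3+2\sqrt{2})\,y_2$. Next, applying the elementary inequality $\log(1+ab)\le\log(1+a)+\log(1+b)$ (valid for $a,b\ge 0$) twice, I would bound, for $y\in D_x$,
\[
4\pi\,G(x,y)=\log\!\Big(1+\tfrac{4x_2y_2}{|x-y|^2}\Big)\le\log(1+4x_2)+\log(1+y_2)+\log\!\big(1+|x-y|^{-2}\big).
\]
Since $x_2<6\,y_2$ on $D_x$, the first term is $\le\log 24+\log(1+y_2)\le\log 24+y_2$ and the second is $\le y_2$, so that $4\pi\,G(x,y)\le\log 24+2y_2+\log(1+|x-y|^{-2})$ throughout $D_x$.

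Integrating this bound over $D_x\subset\Pi$ against $|\zeta|$: the constant term gives $\|\zeta\|_1$, the term $2y_2$ gives $2I(|\zeta|)$, and for $\int_\Pi\log(1+|x-y|^{-2})\,|\zeta(y)|\,\rd y$ I would use the Cauchy--Schwarz inequality together with a translation to bound it by $C_2\,\|\zeta\|_2$, where $C_2:=\big(\int_{\bbR^2}[\log(1+|z|^{-2})]^2\,\rd z\big)^{1/2}<\infty$; finiteness of $C_2$ is the routine check that this function is square-integrable near the origin, where it grows like $(\log|z|)^2$, and at infinity, where it decays like $|z|^{-4}$. Combining the two regions yields $\|\GG\zeta\|_\infty\le C(\|\zeta\|_1+\|\zeta\|_2+I(|\zeta|))$ with an explicit absolute constant $C$. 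The step needing the most care — and the reason a crude H\"older estimate on the logarithmic singularity of $G$ alone fails to give a uniform bound — is uniformity as $x_2\to\infty$: the plateau of $G(x,\cdot)$ near $y=x$ has height of order $\log x_2$, which is unbounded, and it is precisely the comparability $x_2\asymp y_2$ on $D_x$ that trades this growth for growth in $y_2$ and hence lets it be absorbed into the impulse via $\log y_2\le y_2$.
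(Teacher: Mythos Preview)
Your proof is correct and follows essentially the same strategy as the paper's: split the $y$-integration into a ``far'' region where $G$ is uniformly bounded (controlled by $\|\zeta\|_1$) and a ``near'' region where $x_2$ and $y_2$ are comparable, then on the near region decompose $\log(1+4x_2y_2/|x-y|^2)$ into a height term absorbed by $I(|\zeta|)$ via $\log y_2\le y_2$ and a singular term $\sim(\log|x-y|)^2$ handled by Cauchy--Schwarz against $\|\zeta\|_2$. The only cosmetic differences are your splitting criterion ($\rho\lessgtr 1$ versus the paper's $y_2\lessgtr x_2/2$) and your use of $\log(1+ab)\le\log(1+a)+\log(1+b)$ in place of the paper's $\log(a+b+c)\le\log 3+(\log a)_+ +(\log b)_+ +(\log c)_+$; both routes arrive at the same three-term bound.
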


\begin{proof}
It is enough to consider the case $\zeta\geq0$. We note the
inequality
\begin{eqnarray*}
\log(a+b+c)&\leq&\log(3\max\{a,b,c\})\\ &\leq&
\log3+(\log a)_+ +(\log b)_+ +(\log c)_+
\end{eqnarray*}
for positive $a,b,c$, and write $\rho:=|x-y|$ in the formula
(\ref{eq3}) for $G$ to obtain
\[
\int_{y_2\geq{x_2}/2}\log\left(1+\frac{4x_2y_2}{\rho^2}\right)\zeta(y)\rd y
\leq \int_\Pi \log\left(1+\frac{8y_2^2}{\rho^2}\right)\zeta(y)\rd y.
\]
Now
\[
\int_\Pi \left( \log(8y_2^2) \right)_+ \, \zeta(y)\rd y \leq
\int_\Pi(\log8+2y_2)\zeta(y)\rd y=(\log8)\|\zeta\|_1+2I(\zeta),
\]
and
\begin{eqnarray*}
\int_\Pi \left( \log\rho^{-2} \right)_+ \, \zeta(y)\rd y &=&
\int_{\rho\leq1}(-2\log\rho)\zeta(y)\rd y\\
&\leq&
\left(\int_{\rho\leq1}4(\log\rho)^2\rd y\right)^{1/2}\|\zeta\|_2,
\end{eqnarray*}
hence
\[
\int_{y_2\geq{x_2}/2}\log\left(1+\frac{4x_2y_2}{\rho^2}\right)\zeta(y)\rd y
\leq \mbox{const.}(\|\zeta\|_1+I(\zeta)+\|\zeta\|_2).
\]
Also
\begin{eqnarray*}
\int_{y_2\leq{x_2}/2}\log\left(1+\frac{4x_2y_2}{\rho^2}\right)\zeta(y)\rd y
&\leq& \int_\Pi\log\left(1+\frac{2x_2^2}{x_2^2/4}\right)\zeta(y)\rd y\\
&=& (\log9)\|\zeta\|_1,
\end{eqnarray*}
and the desired inequality follows.
\end{proof}

\begin{lemma}
\label{lm10} Given positive numbers $M_1,M_2,M_3$, the functional
$E$ is Lipschitz continuous in $\|\cdot\|_2$ relative to
\[
V:=\{\zeta\in L^2(\Pi)\mid |\mbox{suppt}(\zeta)|\leq M_1, \;
I(|\zeta|)\leq M_2, \; \|\zeta\|_2\leq M_3 \}.
\]
\end{lemma}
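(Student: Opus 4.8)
The plan is to exploit the bilinearity of $E$ together with the symmetry of the Green's function. Writing $B(f,g):=\int_\Pi f\,\GG g$, one has $2E(\zeta)=B(\zeta,\zeta)$, and since $G(x,y)=G(y,x)$ the form $B$ is symmetric, which yields the polarization-type identity $E(\zeta)-E(\eta)=\tfrac12\int_\Pi(\zeta-\eta)\,\GG(\zeta+\eta)\,\rd x$ for $\zeta,\eta\in V$. The first step is to check that all the integrals converge and that this manipulation is legitimate: an element $\zeta$ of $V$ lies in $L^1(\Pi)$ because, by the Cauchy--Schwarz inequality, $\|\zeta\|_1\le|\mbox{suppt}(\zeta)|^{1/2}\|\zeta\|_2\le M_1^{1/2}M_3$; meanwhile Lemma \ref{lm9} gives $\GG|\eta|\in L^\infty(\Pi)$, so, since $G\ge0$, Fubini's theorem applies to $\int_\Pi\int_\Pi G(x,y)|\zeta(x)||\eta(y)|\,\rd y\,\rd x$, and the symmetry $B(\zeta,\eta)=B(\eta,\zeta)$ together with the polarization identity follows.

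The second step is to bound $\GG(\zeta+\eta)$ uniformly on $V$. By Lemma \ref{lm9}, $\|\GG(\zeta+\eta)\|_\infty\le C\bigl(\|\zeta+\eta\|_1+\|\zeta+\eta\|_2+I(|\zeta+\eta|)\bigr)$. Using $\|\zeta+\eta\|_1\le\|\zeta\|_1+\|\eta\|_1\le2M_1^{1/2}M_3$, $\|\zeta+\eta\|_2\le2M_3$ and $I(|\zeta+\eta|)\le I(|\zeta|)+I(|\eta|)\le2M_2$, we obtain $\|\GG(\zeta+\eta)\|_\infty\le K$, where $K:=2C(M_1^{1/2}M_3+M_3+M_2)$ depends only on $M_1,M_2,M_3$.

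The final step is to estimate the difference. Since $\mbox{suppt}(\zeta-\eta)\subset\mbox{suppt}(\zeta)\cup\mbox{suppt}(\eta)$ has measure at most $2M_1$, Cauchy--Schwarz gives $\|\zeta-\eta\|_1\le(2M_1)^{1/2}\|\zeta-\eta\|_2$, and therefore $|E(\zeta)-E(\eta)|\le\tfrac12\|\GG(\zeta+\eta)\|_\infty\,\|\zeta-\eta\|_1\le\tfrac12 K(2M_1)^{1/2}\|\zeta-\eta\|_2$. This is the asserted Lipschitz bound, with Lipschitz constant $\tfrac12K(2M_1)^{1/2}$ depending only on $M_1,M_2,M_3$.

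This argument is essentially routine once Lemma \ref{lm9} is in hand; the only points requiring a little care are the justification of Fubini's theorem for the symmetrization (handled via the nonnegativity of $G$ and the $L^\infty$ bound on $\GG$ of a nonnegative function) and the observation that the constraints defining $V$ pass, with harmless doubling, to $\zeta\pm\eta$ and to the set $\mbox{suppt}(\zeta-\eta)$, so that all constants remain uniform over $V$. I do not anticipate any substantial obstacle.
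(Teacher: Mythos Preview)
Your proof is correct and follows essentially the same route as the paper's: both use the polarization identity $E(\xi)-E(\eta)=\tfrac12\int_\Pi(\xi-\eta)\,\GG(\xi+\eta)$ (the paper writes the symmetric form $\tfrac12\int_\Pi(\xi+\eta)\,\GG(\xi-\eta)$ and then implicitly swaps), bound $\|\GG(\xi+\eta)\|_\infty$ by Lemma~\ref{lm9}, and pass from $\|\xi-\eta\|_1$ to $\|\xi-\eta\|_2$ via Cauchy--Schwarz using the support bound $2M_1$. Your added remarks on Fubini and the finiteness of the bilinear form are just extra care the paper omits.
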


\begin{proof}
For $\xi,\eta\in V$ we have
\begin{eqnarray*}
|E(\xi)-E(\eta)| &=& \frac{1}{2}\int_\Pi(\xi+\eta)\GG(\xi-\eta)\\
&\leq& \|\xi-\eta\|_1\|\GG(\xi+\eta)\|_\infty\\
&\leq& (2M_1)^{1/2}\|\xi-\eta\|_2
C(\|\xi+\eta\|_1+\|\xi+\eta\|_2+I(|\xi+\eta|))\\
&\leq&(2M_1)^{1/2}\|\xi-\eta\|_2 C(2M_1^{1/2}M_3+2M_3+2M_2),
\end{eqnarray*}
where $C$ is the constant provided by Lemma \ref{lm9}.
\end{proof}

\begin{lemma} \label{lm3}
Let $\zeta_0 \in L^2$ be non-negative, suppose
$|\mbox{suppt}(\zeta_0)| =\pi a^2$ for some $0<a<\infty$, and let
$\lambda > 0$. Then\\
{\rm (i)} there exists $Z>0$ (depending on $a$, $\lambda$ and
$\|\zeta_0\|_2$ only) such that, for all
$\zeta\in\mathcal{R}_+(\zeta_0)$,
\[
\GG\zeta(x_1,x_2) - \lambda x_2 <0 \quad \forall x_2>Z ;
\]
{\rm (ii)} if $\zeta \in  \mathcal{R}_+(\zeta_0)$ and $h=\zeta1_U$
where $U$ is a set on which
$\GG \zeta - \lambda x_2$
is nowhere positive, then
\[
(E-\lambda I)(\zeta -h) \geq (E - \lambda I)(\zeta)
\]
with strict inequality unless $h=0$, and in particular we can take 
$U=\bbR\times(Z,\infty)$;\\
{\rm (iii)} any maximizer of $E-\lambda I$ relative to
$\overline{\mathcal{R}(\zeta_0)^\mathrm{w}}$ is supported in
$\bbR\times[0,Z]$;\\
{\rm (iv)} if $\zeta\in \overline{\mathcal{R}(\zeta_0)^\mathrm{w}}$
with $\|\zeta\|_X<\infty$, and
$h=\zeta1_{\bbR\times(Z,\infty)}$, then there is a rearrangement
$h^\prime$ of $h$ supported in
$\bbR\times[0,Z]\backslash\mbox{suppt}(\zeta)$ such that
\[
(E-\lambda I)(\zeta-h+h^\prime)\geq(E-\lambda I)(\zeta);
\]
moreover
$\zeta-h+h^\prime$ is a rearrangement of $\zeta$.
\end{lemma}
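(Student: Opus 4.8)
The plan is to prove the four parts of Lemma~\ref{lm3} in order, since each part feeds the next. For part~(i), I would combine the two estimates of Lemma~\ref{lm2}: for $x_2>e$ we have $\GG\zeta(x_1,x_2)\le\|\zeta\|_2(b+c\log x_2)$, and since every $\zeta\in\mathcal{R}_+(\zeta_0)$ satisfies $\|\zeta\|_2\le\|\zeta_0\|_2$ (by $\mathcal{R}_+(\zeta_0)\subset\overline{\mathcal{R}(\zeta_0)^{\mathrm w}}$ and weak lower semicontinuity of the norm, or directly since $\zeta=\zeta'1_\Omega$ with $\zeta'$ a rearrangement), the quantity $\|\zeta_0\|_2(b+c\log x_2)-\lambda x_2\to-\infty$ as $x_2\to\infty$; choosing $Z>e$ large enough (depending only on $a$ through $A:=\pi a^2$, on $\lambda$, and on $\|\zeta_0\|_2$) makes this negative for all $x_2>Z$. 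This gives (i).

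For part~(ii), write $h=\zeta1_U$ with $U$ a set where $\GG\zeta-\lambda x_2\le0$, so that $\zeta-h=\zeta1_{U^c}$. The key computation is
\[
(E-\lambda I)(\zeta)-(E-\lambda I)(\zeta-h)=\int_\Pi\Bigl(\GG\zeta-\tfrac12\GG h-\lambda x_2\Bigr)h\,\rd x,
\]
using the bilinearity of $E$ (so $E(\zeta)-E(\zeta-h)=\int h\,\GG\zeta-\tfrac12\int h\,\GG h$) and linearity of $I$. On $U$ we have $h\ge0$, $\GG\zeta-\lambda x_2\le0$, and $\GG h\ge0$ pointwise (the Green's function $G$ is nonnegative, as is visible from the form~(\ref{eq3})), so the integrand is $\le0$; hence $(E-\lambda I)(\zeta)\le(E-\lambda I)(\zeta-h)$. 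Strictness unless $h=0$: if $h\not\equiv0$ then $\GG h>0$ on a set of positive measure inside $\{h>0\}$ (again from~(\ref{eq3}), $G(x,y)>0$ whenever $x,y\in\Pi$), making the integral strictly negative. Part~(iii) is then immediate: if $\zeta$ maximizes $E-\lambda I$ over $\overline{\mathcal{R}(\zeta_0)^{\mathrm w}}$, then by (i) the set $U=\bbR\times(Z,\infty)$ satisfies the hypothesis of (ii), and $\zeta-h\in\mathcal{R}_+(\zeta_0)\subset\overline{\mathcal{R}(\zeta_0)^{\mathrm w}}$ (truncation of a weak-closure element need not obviously land back in the weak closure, so here I would instead note $\zeta\in\Sigma_\lambda\subset\mathcal{R}(\zeta_0)$ by the standing hypothesis of the ambient theorems, whence $\zeta-h\in\mathcal{R}_+(\zeta_0)$); strictness in (ii) forces $h=0$, i.e.\ $\zeta$ vanishes a.e.\ on $x_2>Z$.

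Part~(iv) is the substantive one and the main obstacle, because it requires relocating the mass $h=\zeta1_{\bbR\times(Z,\infty)}$ down into the slab $\bbR\times[0,Z]$ \emph{without overlapping} the existing support of $\zeta$ and \emph{without decreasing} $E-\lambda I$, all while keeping the result a genuine rearrangement of $\zeta$. The idea is: since $\mbox{suppt}(\zeta)$ has finite area (bounded by $\pi a^2$ via equimeasurability), the slab $\bbR\times[0,Z]\setminus\mbox{suppt}(\zeta)$ has infinite area, so there is room. I would choose $h'$ to be a rearrangement of $h$ living in a region of the slab that is pushed far out in the $x_1$-direction — concretely, supported in $\bbR\times(0,Z)$ but in $\{|x_1|>R\}$ for $R$ large — so that as $R\to\infty$ the interaction energy $\int h'\,\GG(\zeta-h)\to0$ and the self-energy $E(h')\to E(h')$ stays bounded, while by the decay estimates (Lemma~\ref{lm2}(ii) on $0<x_2<Z$, giving $|\GG(\zeta-h)|\le d\,x_2^\beta\|\zeta\|_2$, uniformly small relative to $\lambda x_2$ is \emph{not} what's needed; rather one wants the cross term small). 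The cleaner route: because $h$ originally sat at heights $x_2>Z$ contributing $-\lambda\int x_2 h<-\lambda Z\|h\|_1$ to the impulse penalty, and $h'$ sits at heights $<Z$, the impulse term \emph{strictly improves} by at least $\lambda(Z-Z')\|h\|_1$ for $h'$ concentrated near height $Z'<Z$; meanwhile the energy change $E(\zeta-h+h')-E(\zeta)=E(\zeta-h+h')-E(\zeta-h)-\bigl(E(\zeta)-E(\zeta-h)\bigr)$ can be written as $\int(\GG(\zeta-h)+\tfrac12\GG h')h'-\int(\GG(\zeta-h)+\tfrac12\GG h)h$, and by choosing $h'$ far enough out in $x_1$ the first integral is made smaller in absolute value than the impulse gain (using that $E(h')=E(h)$ since $h'$ is a rearrangement of $h$, and that $\GG(\zeta-h)$ decays — Lemma~\ref{lm6}(ii) or Lemma~\ref{lm9} controlling $\|\GG(\zeta-h)\|_\infty$ while the cross term's support can be pushed to where $\GG h$ is tiny). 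The delicate bookkeeping is ensuring $h'$ can simultaneously be a rearrangement of $h$, be disjoint from $\mbox{suppt}(\zeta)$, and lie in $\bbR\times[0,Z]$; this works because one can carve out of $\bbR\times(0,Z)\setminus\mbox{suppt}(\zeta)$ a set of the exact measure of $\mbox{suppt}(h)$ sitting at $|x_1|$ arbitrarily large, and then transport the level sets of $h$ onto it. I expect this disjointness-plus-equimeasurability construction, together with the quantitative estimate that the energy perturbation is dominated by the guaranteed impulse gain, to be the crux of the argument; everything else is bookkeeping with the estimates already catalogued in Lemmas~\ref{lm2}, \ref{lm6}, and~\ref{lm9}.
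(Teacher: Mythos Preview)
Your treatment of parts (i) and (ii) matches the paper's.

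For part (iii), you correctly flag the issue that $\zeta-h$ must remain in the competition class, but your proposed workaround---invoking the hypothesis $\Sigma_\lambda\subset\mathcal{R}(\zeta_0)$ from the ambient theorems---is not a hypothesis of Lemma~\ref{lm3}, so the lemma would not be self-contained. The paper instead cites results of Douglas~\cite{RJD:UD} to conclude directly that truncating an element of $\overline{\mathcal{R}(\zeta_0)^{\mathrm w}}$ yields another element of $\overline{\mathcal{R}(\zeta_0)^{\mathrm w}}$.

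For part (iv) there is a genuine gap. Your claim that ``$E(h')=E(h)$ since $h'$ is a rearrangement of $h$'' is false: the kinetic energy $E$ is \emph{not} rearrangement-invariant, so these self-energies do not cancel. Without that cancellation your far-in-$x_1$ strategy does not close: $E(h')$ is unaffected by pushing out in $x_1$, and the cross term $\int h'\,\GG(\zeta-h)$ need not decay either, since $\zeta-h$ has no a~priori decay in $x_1$ (Lemma~\ref{lm6}(ii) would require Steiner symmetry, which is unavailable here). You also explicitly set aside Lemma~\ref{lm2}(ii), which is precisely the estimate the paper invokes. The paper's route is shorter and conceptually different: it first records the \emph{strict} inequality from part~(ii), setting
\[
\epsilon:=(E-\lambda I)(\zeta-h)-(E-\lambda I)(\zeta)>0,
\]
so that it suffices for the contribution of $h'$, namely $\int h'\GG(\zeta-h)+E(h')-\lambda I(h')$, to exceed $-\epsilon$. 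Since $E(h')\geq0$ and $\int h'\GG(\zeta-h)\geq0$, the only negative term is $-\lambda I(h')$. The paper then places $h'$ on a narrow strip along the $x_1$-axis (small $x_2$) outside $\mbox{suppt}(\zeta)$---possible because the strip has infinite measure---citing the decay of $\GG(\zeta-h)$ near $x_2=0$ from Lemma~\ref{lm2}(ii); on such a strip $I(h')$ is as small as desired, and the inequality follows. The key step you missed is exploiting the slack $\epsilon$ already won in (ii), which reduces (iv) to making the \emph{added} piece $h'$ nearly invisible, rather than balancing a full energy change against an impulse gain.
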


\begin{proof}
(i) follows easily from the estimate of Lemma \ref{lm2}.

\noindent
For (ii), observe that
\begin{eqnarray*}
(E-\lambda I)(\zeta - h)
& = & (E-\lambda I)(\zeta)-\int_\Pi (\GG\zeta-\lambda x_2)h+E(h)\\
& = & (E-\lambda I)(\zeta)-\int_U (\GG\zeta-\lambda x_2)h +E(h)\\
& \geq & (E-\lambda I)(\zeta) + E(h),
\end{eqnarray*}
since on $U$ we have $(\GG\zeta-\lambda x_2) \leq 0$ and $h \geq 0$.
The result follows since $E(h)>0$ unless $h=0$.

\noindent
(iii) now follows from (ii), since if
$\zeta\in\overline{\mathcal{R}(\zeta_0)^\mathrm{w}}$ then
$\zeta-h\in\overline{\mathcal{R}(\zeta_0)^\mathrm{w}}$ also, using
results of Douglas \cite{RJD:UD}.

\noindent
(iv) is trivial if $h=0$. Suppose therefore that $h \neq 0$. Let
\[
\epsilon=(E-\lambda I)(\zeta-h)-(E-\lambda I)(\zeta)
\]
which is positive by (ii). In view of the decay of $\GG(\zeta-h)$ at
the $x_2$-axis quantified in Lemma \ref{lm2}(ii) it is enough to
form $h^\prime$ by rearranging $h$ on the part of a narrow strip
along the $x_2$-axis outside $\mbox{suppt}(\zeta)$; this is justified
since any two sets of equal finite positive Lebesgue measure are
measure-theoretically equivalent.
\end{proof}

\begin{lemma}
\label{lm8} Let $\zeta_0 \in L^2(\Pi)$ be a non-negative function
with support of finite area, and let $\lambda>0$. Then $E-\lambda I$
has the same supremum on all of the sets
$\overline{\mathcal{R}(\zeta_0)^\mathrm{w}}$,
$\mathcal{R}_+(\zeta_0)$, $\mathcal{RC}(\zeta_0)$, and
$\mathcal{R}(\zeta_0)$.
\end{lemma}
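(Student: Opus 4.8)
The strategy is to exploit the chain of inclusions \eqref{eq14}, namely $\mathcal{R}(\zeta_0)\subset\mathcal{RC}(\zeta_0)\subset\mathcal{R}_+(\zeta_0)\subset\overline{\mathcal{R}(\zeta_0)^\mathrm{w}}$, together with the fact that $E-\lambda I$ is concave-like enough to be controlled on the weak closure. Since the sets grow from left to right, the supremum of any functional is non-decreasing along the chain, so it suffices to show that the supremum over $\overline{\mathcal{R}(\zeta_0)^\mathrm{w}}$ does not exceed the supremum over $\mathcal{R}(\zeta_0)$. Equivalently, I would show that for every $\zeta\in\overline{\mathcal{R}(\zeta_0)^\mathrm{w}}$ with $(E-\lambda I)(\zeta)$ finite, there is an element $\tilde\zeta\in\mathcal{R}(\zeta_0)$ with $(E-\lambda I)(\tilde\zeta)\geq(E-\lambda I)(\zeta)-\epsilon$ for any prescribed $\epsilon>0$; combined with the trivial direction this pins all four suprema together.

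First I would reduce to $\zeta$ with $\|\zeta\|_X<\infty$: if $I(\zeta)=\infty$ then by our convention $(E-\lambda I)(\zeta)=-\infty$ and such $\zeta$ are irrelevant, while $\|\zeta\|_2\leq\|\zeta_0\|_2$ and $\|\zeta\|_1\leq\|\zeta_0\|_1$ hold on the weak closure by convexity and weak lower semicontinuity of these norms (the $L^1$ bound needs a little care since $L^1$ is not reflexive, but elements of $\overline{\mathcal{R}(\zeta_0)^\mathrm{w}}$ are weak-$L^2$ limits of functions supported where their values are controlled, and Douglas's results give $0\le\zeta$ with $\int\zeta\le\int\zeta_0$). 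Next, applying Lemma \ref{lm3}(iv), I may replace $\zeta$ by a rearrangement of itself supported in the fixed strip $\bbR\times[0,Z]$ without decreasing $E-\lambda I$; since this rearrangement step preserves membership in $\overline{\mathcal{R}(\zeta_0)^\mathrm{w}}$ and the value of the functional, I may as well assume from the outset that $\zeta$ vanishes for $x_2>Z$. Now all the relevant functions live in a set of the form $V$ from Lemma \ref{lm10}, with uniform bounds $M_1=|\mathrm{suppt}|$, $M_2$ on $I(|\cdot|)$ and $M_3$ on $\|\cdot\|_2$, on which $E$ — hence $E-\lambda I$, since $I$ is linear and bounded there — is Lipschitz in $\|\cdot\|_2$.

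The core of the argument is then a density statement: $\mathcal{R}(\zeta_0)$ is $\|\cdot\|_2$-dense in $\overline{\mathcal{R}(\zeta_0)^\mathrm{w}}$ after truncation to the strip, or more precisely that any $\zeta$ in the weak closure (supported in the strip, with $\|\zeta\|_X<\infty$) is a strong $L^2$-limit of elements of $\mathcal{R}(\zeta_0)$. This is essentially the content of Douglas's theory \cite{RJD:UD}: the weak closure of a rearrangement class is its closed convex hull, and strong closure of the convex hull of $\mathcal{R}(\zeta_0)$ is no larger; combined with the fact that $\mathcal{R}(\zeta_0)$ itself is weakly dense in its convex hull, one obtains that the strong and weak closures of $\mathcal{R}(\zeta_0)$, intersected with any bounded-support class, coincide on the level needed — more carefully, I would invoke that for $\zeta_0\in L^2$ with support of finite area, $\overline{\mathcal{R}(\zeta_0)^\mathrm{w}}\cap\{\text{supp}\subset\text{fixed bounded set}\}$ is contained in the $\|\cdot\|_2$-closure of $\mathcal{R}(\zeta_0)$ (again using the $L^1$–$L^2$ equivalence for equimeasurable functions with bounded support noted after Lemma \ref{lm1}). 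Granting this, pick $\zeta_n\in\mathcal{R}(\zeta_0)$ with $\|\zeta_n-\zeta\|_2\to0$; by Lemma \ref{lm10} applied on the common set $V$, $(E-\lambda I)(\zeta_n)\to(E-\lambda I)(\zeta)$, so for large $n$ the value is within $\epsilon$, giving the desired $\tilde\zeta=\zeta_n\in\mathcal{R}(\zeta_0)$. Taking the supremum over $\zeta$ and combining with the elementary monotonicity along \eqref{eq14} yields equality of all four suprema.

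The main obstacle I anticipate is the density step — ensuring that truncating to the strip $\bbR\times[0,Z]$ (Lemma \ref{lm3}(iv)) interacts correctly with approximation, and that the approximating rearrangements $\zeta_n$ can themselves be taken supported in a \emph{fixed} bounded set (not just bounded supports whose measure is controlled but whose location drifts), so that Lemma \ref{lm10}'s hypotheses genuinely apply with a single triple $(M_1,M_2,M_3)$. If the supports of the $\zeta_n$ were allowed to escape to infinity in the $x_1$-direction, $I(|\zeta_n|)$ could still be controlled but the argument needs the uniform bound; I would handle this by noting that $\|\cdot\|_2$-convergence to $\zeta$ already forces the bulk of each $\zeta_n$ to sit near $\text{suppt}(\zeta)$, and then discarding a small-$L^2$-mass tail (whose contribution to $E-\lambda I$ is controlled by Lemmas \ref{lm9} and \ref{lm2}) before re-rearranging into a fixed large box. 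This is routine but is the one place where the unbounded domain genuinely bites.
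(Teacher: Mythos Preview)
Your density step is false, and this is a genuine gap rather than a technical nuisance. Elements of $\overline{\mathcal{R}(\zeta_0)^\mathrm{w}}$ are \emph{not} in general strong $L^2$-limits of elements of $\mathcal{R}(\zeta_0)$. Take $\zeta_0=1_\Omega$ with $|\Omega|=1$, so $\mathcal{R}(\zeta_0)=\{1_A:|A|=1\}$, and let $\eta=\tfrac12 1_B$ with $|B|=2$, which lies in the weak closure. A direct computation gives $\|\eta-1_A\|_2^2=\tfrac14|B|+|A\setminus B|\geq\tfrac12$ for every such $A$, so $\eta$ is bounded away from $\mathcal{R}(\zeta_0)$ in $L^2$. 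Your chain of reasoning from Douglas conflates the strong closure of $\mathcal{R}(\zeta_0)$ (which is essentially $\mathcal{R}(\zeta_0)$ itself, a highly non-convex set) with the strong closure of its convex hull; the latter equals the weak closure, but that does not help you approximate by genuine rearrangements in norm. Because your argument then feeds this approximation into Lemma~\ref{lm10}, the whole scheme collapses. There is also a secondary problem: elements of the weak closure need not have support of finite area, so the set $V$ of Lemma~\ref{lm10} may not contain the $\zeta$ you start from.

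The paper sidesteps strong approximation entirely. After cutting down to a \emph{bounded} box $(-R,R)\times(0,Z)$ (first using Lemma~\ref{lm3}(ii) for the $x_2$-cut, then monotone convergence for the $x_1$-cut, losing at most $\epsilon$), it exploits the fact that $\GG$ is a \emph{compact} operator on $L^2$ of that box, so $E$ is weakly continuous there and weak approximation by $\mathcal{R}_+(\zeta_0)$ (which is trivially available) already controls $E-\lambda I$. The passage from $\mathcal{R}_+(\zeta_0)$ to $\mathcal{R}(\zeta_0)$ is then handled not by density but by explicitly adding back the missing mass $\zeta_0^\Delta-\xi^\Delta$ on an arbitrarily thin strip near $x_2=0$, where both $E$ and $I$ contributions vanish in the limit. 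The compactness of $\GG$ on bounded domains is the key idea you are missing.
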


\begin{proof}
In view of the inclusions (\ref{eq14}) it will be enough to prove
equality of the suprema on the first and last sets in the list. Let
$\xi\in\overline{\mathcal{R}(\zeta_0)^\mathrm{w}}$. Then
$\xi^\prime:=\xi1_{\bbR\times(0,Z)}\in\overline{\mathcal{R}(\zeta_0)^\mathrm{w}}$
and, by Lemma \ref{lm3}(ii),
\[
(E-\lambda I)(\xi^\prime)\geq (E-\lambda I)(\xi) .
\]
By the monotone convergence theorem, given $\epsilon>0$ we can
choose $R>0$ such that $\xi\mathrm{''} :=\xi^\prime1_{(-R,R)\times \bbR}
= \xi 1_{(-R,R)\times (0,Z)}$, which also belongs to
$\overline{\mathcal{R}(\zeta_0)^\mathrm{w}}$, satisfies
\[
(E-\lambda I)(\xi\mathrm{''}) > (E-\lambda I)(\xi) - \epsilon.
\]
Now, by compactness of $\GG$ as an operator on
$L^2((-R,R)\times(0,Z))$, within every weak neighbourhood of
$\xi\mathrm{''}$ we can find
$\xi\mathrm{'''}\in\mathcal{R}_+(\zeta_0)$ supported in
$(-R,R)\times(0,Z)$ with
\[
-\epsilon<(E-\lambda I)(\xi\mathrm{'''})-(E-\lambda
I)(\xi\mathrm{''})<\epsilon.
\]
Finally, if $\xi\mathrm{'''}\not\in\mathcal{R}(\zeta_0)$, given
$\delta>0$ sufficiently small, we may choose any rearrangement
$\eta_\delta$ of $\zeta_0^\Delta - \xi^\Delta$ on a subset of
$(1/\delta,\pi a^2/\delta)\times(0,\delta)$ to find that
$\xi_\delta:=\xi\mathrm{'''}+\eta_\delta$ is a rearrangement of
$\zeta_0$ supported in a bounded subset of $\bbR\times(0,Z)$,
and that as $\delta\to 0$ we have $\xi_\delta\to\xi\mathrm{'''}$
weakly in $L^2$ and $(E-\lambda I)(\xi_\delta)\to(E-\lambda
I)(\xi\mathrm{'''})$.
\end{proof}

\begin{lemma} \label{lm4} {\bf (Vanishing excluded.)}
Suppose that $\zeta_0$, $a$, $\lambda$ and $\Sigma_\lambda$ satisfy
the hypotheses of the Maximization Theorem \ref{thm2}. Then no
maximizing sequence for $E-\lambda I$ relative to
$\mathcal{R}_+(\zeta_0)$ has the Vanishing Property of Lemma
\ref{lm1}.
\end{lemma}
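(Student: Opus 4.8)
The plan is to argue by contradiction: suppose $\{\zeta_n\}$ is a maximizing sequence in $\mathcal{R}_+(\zeta_0)$ with the Vanishing Property, and derive $S_\lambda \leq 0$, contradicting the fact (noted in the Remarks after Theorem \ref{thm2}) that $0$ is excluded as a maximizer and hence $S_\lambda > 0$. By Lemma \ref{lm3}(ii) we may replace each $\zeta_n$ by its restriction to $\bbR \times (0,Z)$ without decreasing $E - \lambda I$ and while staying in $\mathcal{R}_+(\zeta_0)$, so from the outset we assume all $\zeta_n$ are supported in the strip $\bbR \times (0,Z)$. Since $I(\zeta_n) \geq 0$ on this strip, it suffices to show $E(\zeta_n) \to 0$; indeed then $\limsup (E - \lambda I)(\zeta_n) \leq \limsup E(\zeta_n) = 0$, forcing $S_\lambda \leq 0$.

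The core of the argument is a uniform estimate on $E(\zeta_n)$ in terms of the local mass $\sup_y \int_{y + B_R} \zeta_n$, which Vanishing sends to $0$ for each fixed $R$. I would write $2E(\zeta_n) = \int_\Pi \zeta_n \, \GG \zeta_n$ and split the kernel $G(x,y)$ according to whether $|x - y| \leq R$ or $|x - y| > R$. For the near-field part $|x-y| \leq R$: using the representation (\ref{eq3}) together with the bound $0 < x_2, y_2 < Z$, one has $G(x,y) \leq \tfrac{1}{4\pi}\log(1 + 4Z^2/|x-y|^2)$, a kernel that is in $L^q$ of the ball $B_R$ for every $q < \infty$; by Young's (or Hölder's) inequality, $\int_{|x-y|\leq R} G(x,y)\zeta_n(y)\,\rd y \leq K(R,Z)\,\|\zeta_n\|_p$ when restricted to any unit cell, and covering the strip by unit cells and using that $\zeta_n$ is a curtailed rearrangement of $\zeta_0$ (so $\|\zeta_n\|_p \leq \|\zeta_0\|_p$ and the local $L^1$ mass is small), one controls $\int \zeta_n \int_{|x-y|\leq R}G(x,y)\zeta_n(y)\,\rd y$ by $C(R,Z,p,\|\zeta_0\|_p)\cdot \big(\sup_y\int_{y+B_R}\zeta_n\big)^{1/p'} \|\zeta_0\|_1 \to 0$. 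For the far-field part $|x-y| > R$: again from (\ref{eq3}), $G(x,y) \leq \tfrac{1}{4\pi}\cdot \tfrac{4x_2 y_2}{|x-y|^2} \leq \tfrac{Z^2}{\pi R^2}$ for $|x-y| > R$, so $\int_{|x-y|>R} G(x,y)\zeta_n(y)\,\rd y \leq \tfrac{Z^2}{\pi R^2}\|\zeta_n\|_1 \leq \tfrac{Z^2}{\pi R^2}\|\zeta_0\|_1$, whence this contribution to $2E(\zeta_n)$ is at most $\tfrac{Z^2}{\pi R^2}\|\zeta_0\|_1^2$, which is independent of $n$ and small once $R$ is large.

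Putting the two pieces together: given $\epsilon > 0$, first choose $R$ large so the far-field term is $< \epsilon/2$ for all $n$; then, with $R$ fixed, Vanishing gives $\sup_y \int_{y+B_R}\zeta_n \to 0$, so the near-field term is $< \epsilon/2$ for $n$ large. Hence $\limsup_n E(\zeta_n) \leq \epsilon/2 < \epsilon$; as $\epsilon$ was arbitrary, $E(\zeta_n) \to 0$, and therefore $S_\lambda = \lim (E - \lambda I)(\zeta_n) \leq 0$. Since the hypotheses of Theorem \ref{thm2} (via the hypotheses of Theorem \ref{thm1}, or directly the assumption $\emptyset \neq \Sigma_\lambda \subset \mathcal{R}(\zeta_0)$ with $\zeta_0 \not\equiv 0$) guarantee $S_\lambda > 0$, this is the desired contradiction. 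The main obstacle is the near-field estimate: one must exploit the $L^p$ bound with $p > 2$ to absorb the mildly singular logarithmic kernel and convert smallness of the local $L^1$ mass into smallness of the energy uniformly in $n$; the far-field bound is comparatively routine given the clean decay $G(x,y) = O(|x-y|^{-2})$ visible in (\ref{eq3}) on a bounded strip.
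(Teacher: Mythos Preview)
Your proof is correct and follows essentially the same route as the paper: restrict to the strip $\bbR\times(0,Z)$ via Lemma~\ref{lm3}(ii), split $\GG\zeta_n(x)$ into contributions from $|x-y|\le R$ and $|x-y|>R$, bound the far part by $Z^2\|\zeta_0\|_1/(\pi R^2)$ using \eqref{eq3}, show the near part tends to zero using local smallness, conclude $E(\zeta_n)\to0$ and hence $S_\lambda\le0$, contradicting $S_\lambda>0$. The only difference is in the near-field step: the paper invokes Lemma~\ref{lm9} together with the equivalence of local $L^1$ and $L^2$ norms on $\mathcal{R}_+(\zeta_0)$ (so Vanishing gives $\|\zeta_n\|_{L^2(x+B_R)}\to0$ uniformly in $x$, hence $\GG(\zeta_n 1_{x+B_R})(x)\to0$), whereas you estimate the logarithmic kernel directly by H\"older --- your sketch of that step is a bit loose (the ``covering by unit cells'' is unnecessary, and the exponent $1/p'$ requires a three-factor H\"older rather than the two-factor one you describe), but the idea is sound and leads to the same conclusion.
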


\begin{proof} Suppose $\{ \zeta_n \}_{n=1}^\infty$ is a maximizing
sequence for $E-\lambda I$ relative to $\mathcal{R}_+(\zeta_0)$ that
has the Vanishing Property,
reformulated by the equivalence of $\| \cdot \|_1$ and $\| \cdot \|_2$
on $\mathcal{R}_+(\zeta_0)$ as
\[
\forall R>0 \quad \lim_{n\to\infty} \sup_{y\in{R}^N} \int_{y+B_R}
\zeta_n^2 = 0 .
\]
By Lemma \ref{lm3}(ii), we can modify the $\zeta_n$ so they are
supported in $\bbR\times[0,Z]$, while remaining a maximizing
sequence with the Vanishing Property.
Consider any $R>0$. Then for $x\in\Pi$, relative to $B:=x+B_R$, we
have
\[
\left.
\begin{array}{l}
\| \zeta_n \|_{L^2(B)} \to 0 \\
\| \zeta_n \|_{L^1(B)} \leq \mbox{const.}\|\zeta_n \|_{L^2(B)} \to 0 \mbox{ (by H\"{o}lder's inequality)} \\
I(\zeta_n 1_B) \leq Z \| \zeta_n \|_{L^1(B)} \to 0
\end{array}
\right\}
\]
uniformly over $x \in \Pi$,
so $\GG(\zeta_n 1_B)(x) \to 0$ as $n \to \infty$ uniformly over
$x\in\Pi$, by Lemma \ref{lm9}.
By writing the Green's function in the form
(\ref{eq3}) we estimate
\[
\GG\left(\zeta_n(1-1_B)\right)(x) \leq \frac{Z^2}{\pi R^2}
\|\zeta_n\|_1 \leq \frac{Z^2}{\pi R^2} \|\zeta_0\|_1 .
\]
Hence, as $n \to \infty$,
\[
E(\zeta_n) \leq \|\zeta_n\|_1 \left( \frac{Z^2}{\pi R^2}
\|\zeta_0\|_1 +o(1) \right) \leq \|\zeta_0\|_1 \left( \frac{Z^2}{\pi
R^2} \|\zeta_0\|_1 +o(1) \right).
\]
This holds for every $R>0$, hence $E(\zeta_n) \to 0$ as $n\to\infty$.
Hence
\[
\limsup_{n\to\infty} (E-\lambda I)(\zeta_n) \leq 0 .
\]
But the hypotheses of the Lemma, together with Lemma \ref{lm8}
ensure that the supremum of $E-\lambda I$ relative to
$\mathcal{R}_+(\zeta_0)$ is positive. Thus Vanishing does not occur.
\end{proof}

\begin{lemma} \label{lm5} {\bf (Dichotomy excluded.)}
Suppose that $\zeta_0$, $a$, $\lambda$, $\Sigma_\lambda$ and $S_\lambda$ satisfy
the hypotheses of the Maximization Theorem \ref{thm2}. Then no
maximizing sequence for $E-\lambda I$ relative to
$\mathcal{R}_+(\zeta_0)$ has the Dichotomy Property of Lemma
\ref{lm1}.
\end{lemma}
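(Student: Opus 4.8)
The plan is to argue by contradiction, using the strict positivity of $G$ to convert a dichotomous maximizing sequence into an admissible configuration of strictly larger $E-\lambda I$. Suppose $\{\zeta_n\}\subset\mathcal{R}_+(\zeta_0)$ is maximizing and has the Dichotomy Property of Lemma \ref{lm1}. By Lemma \ref{lm3}(ii) I may take each $\zeta_n$ supported in $\bbR\times[0,Z]$, and I use throughout the equivalence of $\|\cdot\|_1$ and $\|\cdot\|_2$ on $\mathcal{R}_+(\zeta_0)$. Fix a small $\epsilon>0$ and unpack Dichotomy in Lions' form, in which the ``compact'' piece is $\zeta_n 1_{y_n+B_{R_\epsilon}}$ for a radius $R_\epsilon$ depending only on $\epsilon$: passing to a subsequence, $\zeta_n=\zeta_n^{(1)}+\zeta_n^{(2)}+r_n$ with $\zeta_n^{(1)}=\zeta_n 1_{y_n+B_{R_\epsilon}}$, with $\|r_n\|_1<\epsilon$ (so $\|r_n\|_2$, $I(r_n)$ and, by Lemma \ref{lm9}, $\|\GG r_n\|_\infty$ are all at most some $\theta(\epsilon)\to0$), with $\mathrm{dist}(\mathrm{suppt}\,\zeta_n^{(1)},\mathrm{suppt}\,\zeta_n^{(2)})\to\infty$, and with $\|\zeta_n^{(i)}\|_1\to m_i$ where $m_1\geq\alpha-\epsilon$, $m_2\geq\mu-\alpha-\epsilon$, $\mu:=\|\zeta_0\|_1$. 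After an $x_1$-translation I may take $y_n=0$, so $\zeta_n^{(1)}$ is supported in the fixed set $K_\epsilon:=B_{R_\epsilon}\cap(\bbR\times[0,Z])$. Since both pieces lie in the strip with supports receding to infinity, the form (\ref{eq3}) of $G$ gives $\GG\zeta_n^{(2)}\to0$ uniformly on $\mathrm{suppt}\,\zeta_n^{(1)}$, exactly as in the proof of Lemma \ref{lm4}, so $\int_\Pi\zeta_n^{(1)}\GG\zeta_n^{(2)}\to0$; the $r_n$-terms are $O(\theta(\epsilon))$ and $I$ is linear, hence with $\epsilon'=\epsilon'(\epsilon)\to0$
\[
(E-\lambda I)(\zeta_n)=(E-\lambda I)(\zeta_n^{(1)})+(E-\lambda I)(\zeta_n^{(2)})+o(1)+O(\epsilon'),
\]
and so, since $(E-\lambda I)(\zeta_n)\to S_\lambda$ while $(E-\lambda I)(\zeta_n^{(i)})\leq S_\lambda$ by Lemma \ref{lm8}, $S_\lambda\leq\limsup_n[(E-\lambda I)(\zeta_n^{(1)})+(E-\lambda I)(\zeta_n^{(2)})]+O(\epsilon')$.

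The main case is that, along a subsequence, $E(\zeta_n^{(1)})$ and $E(\zeta_n^{(2)})$ are both bounded below by a positive constant (this is forced whenever $\liminf(E-\lambda I)(\zeta_n^{(i)})>0$ for both $i$, since $E\geq E-\lambda I$). Then neither $\{\zeta_n^{(i)}\}$ vanishes --- a vanishing sequence has $E\to0$ by the estimate in the proof of Lemma \ref{lm4} --- so, discarding via Lemma \ref{lm2}(ii) the negligible contribution of heights $<\kappa$, each $\zeta_n^{(i)}$ carries mass $\geq m_i^0>0$ in a fixed ball at heights in $[\kappa,Z]$. Applying Lemma \ref{lm1} to $\{\zeta_n^{(2)}\}$, which does not vanish, and iterating finitely often if dichotomy recurs (the extracted masses are positive and sum to at most $\mu$), I may --- dropping a tail of $L^2$-norm $<\delta$, which changes $E-\lambda I$ by $O(\delta)$ (Lemma \ref{lm10} and continuity of $I$) --- replace $\zeta_n^{(2)}$ by a function $\xi_n$ equimeasurable with a restriction of $\zeta_n$, supported in a fixed box, with $\|\xi_n\|_1\geq m_2/2$ and carrying mass $\geq m_2^0/2$ at heights in $[\kappa,Z]$. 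Translating $\xi_n$ by a fixed amount in $x_1$ into a box disjoint from $K_\epsilon$, the function $\hat\zeta_n:=\zeta_n^{(1)}+\xi_n$ is obtained by relocating disjointly supported pieces of $\zeta_n$, so $\hat\zeta_n\in\mathcal{R}_+(\zeta_0)$ (this class is closed under measure-preserving rearrangement), and
\[
(E-\lambda I)(\hat\zeta_n)=(E-\lambda I)(\zeta_n^{(1)})+(E-\lambda I)(\zeta_n^{(2)})+\int_\Pi\zeta_n^{(1)}\GG\xi_n+O(\delta).
\]
Since $\zeta_n^{(1)}$ and $\xi_n$ lie in the strip at mutual horizontal distance at most $D=D(\epsilon,\delta)$ and each carries mass bounded below at heights in $[\kappa,Z]$, the interaction satisfies $\int_\Pi\zeta_n^{(1)}\GG\xi_n\geq\frac{1}{4\pi}\log(1+4\kappa^2 D^{-2})\,m_1^0(m_2^0/2)=:\delta_0>0$ independently of $n$. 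With the splitting identity this yields $(E-\lambda I)(\hat\zeta_n)\geq(E-\lambda I)(\zeta_n)+\delta_0-O(\delta)-O(\epsilon')+o(1)$; choosing $\epsilon$ and then $\delta$ small enough that $O(\delta)+O(\epsilon')<\delta_0/2$ forces $(E-\lambda I)(\hat\zeta_n)\to S_\lambda+\delta_0/2>S_\lambda$, contradicting $\hat\zeta_n\in\mathcal{R}_+(\zeta_0)$ and Lemma \ref{lm8}.

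In the remaining case one piece, say $\zeta_n^{(2)}$, has $E(\zeta_n^{(2)})\to0$; then $(E-\lambda I)(\zeta_n^{(2)})\to-\lambda\ell$ with $\ell=\lim I(\zeta_n^{(2)})\geq0$, and since $(E-\lambda I)(\zeta_n^{(1)})\leq S_\lambda$ the splitting forces $\ell=O(\epsilon')$ and $\lim(E-\lambda I)(\zeta_n^{(1)})\geq S_\lambda-O(\epsilon')$. As $\zeta_n^{(1)}$ is supported in the fixed set $K_\epsilon$, a subsequence converges weakly in $L^2(K_\epsilon)$ to some $\zeta_*\in\overline{\mathcal{R}(\zeta_0)^\mathrm{w}}$ with $\|\zeta_*\|_1\leq m_1\leq\alpha+\epsilon$; by weak continuity of $E$ on bounded subsets of $L^2(K_\epsilon)$ (compactness of $\GG$, as in the proof of Lemma \ref{lm8}) and of $I$, $(E-\lambda I)(\zeta_*)\geq S_\lambda-O(\epsilon')$. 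Letting $\epsilon\downarrow0$ and passing to the limit (as in the proof of Lemma \ref{lm8}, the limits staying in the weakly closed convex set $\overline{\mathcal{R}(\zeta_0)^\mathrm{w}}$) produces an element of $\Sigma_\lambda$ of mass at most $\alpha<\mu$, contradicting the hypothesis $\Sigma_\lambda\subset\mathcal{R}(\zeta_0)$; this disposes of all cases.

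The hard part is the main case: one must keep the relocated configuration inside $\mathcal{R}_+(\zeta_0)$ --- which is why $\hat\zeta_n$ is assembled from pieces of the single admissible function $\zeta_n$ rather than from limiting profiles, the weak closure $\overline{\mathcal{R}(\zeta_0)^\mathrm{w}}$ not being stable under disjoint superposition --- while ensuring that the gain $\delta_0$ from the now strictly positive interaction energy dominates the errors $O(\delta)+O(\epsilon')$ produced by the discarded tails and the Dichotomy remainder $r_n$. Controlling how $\delta_0$ deteriorates, through $R_\epsilon$ and through the box-size needed to localize $\zeta_n^{(2)}$, as $\epsilon,\delta\to0$ is the central quantitative obstacle; establishing the non-vanishing and the ``core'' of mass at heights bounded away from the axis (via Lemmas \ref{lm2}(ii), \ref{lm4} and \ref{lm9}) is what makes that interaction lower bound available.
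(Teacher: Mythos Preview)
Your strategy --- split, kill the cross-term, then recombine the two pieces at bounded distance to gain a strictly positive interaction --- is in spirit what the paper does, but the paper executes it quite differently, and the difference matters because your version leaves the ``central quantitative obstacle'' you yourself identify unresolved.

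In your main case you need $\delta_0>O(\epsilon')+O(\delta)$, where $\delta_0\sim\log\bigl(1+4\kappa^2/D^2\bigr)$ and $D$ is the diameter of the combined support of $\zeta_n^{(1)}$ and the relocated $\xi_n$. But $D$ is at least of order $R_\epsilon+R_\delta$, where $R_\epsilon$ is the Lions radius needed to trap mass $\alpha$ to within $\epsilon$, and $R_\delta$ is the box size needed to localise $\zeta_n^{(2)}$ up to an $L^2$-tail $<\delta$; both radii generically tend to infinity as $\epsilon,\delta\to0$, so $\delta_0\to0$ at an uncontrolled rate and there is no reason $\delta_0$ should dominate $O(\epsilon')+O(\delta)$. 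Acknowledging this as ``the central quantitative obstacle'' is not the same as overcoming it. The related claim that iterating Lemma~\ref{lm1} on $\zeta_n^{(2)}$ terminates ``finitely often'' because ``the extracted masses are positive and sum to at most $\mu$'' is also unjustified: the dichotomy parameter at each step can be arbitrarily small, so finitely many steps need not reduce the remainder below any prescribed $\delta$. Your remaining case has a parallel gap: the weak limits $\zeta_{*,\epsilon}$ live in the growing boxes $K_\epsilon$, so passing to the limit $\epsilon\downarrow0$ to land in $\Sigma_\lambda$ requires exactly the compactness you do not have.

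The paper avoids all of this with one device you did not use: Steiner symmetrisation about the $x_2$-axis. After symmetrising each piece $\zeta_n^{(i)}$, Lemma~\ref{lm6}(ii) gives a number $k$ depending only on $\zeta_0$, $\lambda$ and $p$ such that $\GG\zeta-\lambda x_2>0$ only on $(-k,k)\times(0,Z)$ for every Steiner-symmetric $\zeta\in\mathcal{R}_+(\zeta_0)$; truncating to this set (which by Lemma~\ref{lm3}(ii) only increases $E-\lambda I$) puts both pieces into a \emph{fixed} box, uniformly in $n$ and in the dichotomy parameter. One then translates the two boxes to be disjoint, passes to weak limits $\overline{\zeta^{(1)}},\overline{\zeta^{(2)}}$ by compactness of $\GG$ on the bounded domain, and uses the hypothesis $\Sigma_\lambda\subset\mathcal{R}(\zeta_0)$ to force $\|\overline{\zeta^{(1)}}\|_2^2+\|\overline{\zeta^{(2)}}\|_2^2=\mu$, hence both limits are nonzero and the strictly positive interaction $\int\overline{\zeta^{(1)}}\GG\overline{\zeta^{(2)}}$ yields $(E-\lambda I)(\overline{\zeta^{(1)}}+\overline{\zeta^{(2)}})>S_\lambda$. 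The uniform localisation coming from Steiner symmetry is precisely what replaces your circular $\epsilon$--$\delta$--$\delta_0$ dependence.
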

\begin{proof}
Suppose $\{ \zeta_n \}_{n=1}^\infty$ is a maximizing sequence for
$E-\lambda I$ relative to $\mathcal{R}_+(\zeta_0)$ that has the
Dichotomy Property. In view of the remarks on convergence following
Lemma \ref{lm1}
we can assume that, for some $0<\alpha<\mu$, and some
restrictions $\{\zeta_n^{(i)}\}_{i=1}^3$ of $\zeta_0$ to sets
$\{\Omega_n^{(i)}\}_{n=1}^3$ partitioning $\Pi$,
we have
\begin{equation}
\left.
\begin{array}{c}
\|\zeta_n^{(3)}\|_2 \to 0,\\
\|\zeta_n^{(1)}\|_2^2 \to \alpha,\\
\|\zeta_n^{(2)}\|_2^2 \to \beta:= \mu - \alpha,\\
\mbox{dist}(\mbox{suppt}(\zeta_n^{(1)}),\mbox{suppt}(\zeta_n^{(2)})) \to
\infty,\\
(E-\lambda I)(\zeta_n^{(1)}+\zeta_n^{(2)}+\zeta_n^{(3)}) \to S_\lambda,
\end{array}
\right\}
\label{eq18}
\end{equation}
as $n\to\infty$.

We may multiply the functions $\{\zeta_n^{(i)}\}_{i=1}^3$ by
$1_{\bbR\times(0,Z)}$, where $Z$ is the number provided by Lemma \ref{lm3}(i),
yet still assume the last two lines of $(\ref{eq18})$ to hold,
in view of Lemma \ref{lm3}(ii).
We also have
\begin{eqnarray*}
(E-\lambda I)(\zeta^{(1)}_n + \zeta^{(2)}_n) & = &
(E-\lambda I)(\zeta_n) -\lambda I (\zeta^{(3)}_n)\\
& & -\int_\Pi \zeta^{(3)}_n\GG(\zeta^{(1)}_n + \zeta^{(2)}_n +
{\textstyle \frac{1}{2}} \zeta^{(3)}_n) \\
& \to & S_\lambda \quad \mbox{as } n\to\infty
\end{eqnarray*}
by Lemma \ref{lm9}, so we may replace $\zeta_n^{(3)}$ by $0$ and suppose
$\zeta_n = \zeta_n^{(1)} + \zeta_n^{(2)}$ for all $n$.

Formula (\ref{eq3}) for the Green's function leads to the estimate
\begin{eqnarray*}
\int_\Pi \zeta_n^{(1)} \GG \zeta_n^{(2)} &  \leq & \pi^{-1}\|\zeta_n^{(1)}\|_1
\|\zeta_n^{(2)}\|_1 Z^2
\mbox{dist}(\mbox{suppt}(\zeta_n^{(1)}),\mbox{suppt}(\zeta_n^{(2)}))^{-2}\\
& \to & 0 \quad \mbox{as } n\to\infty.
\end{eqnarray*}
Consequently
\begin{eqnarray}
(E-\lambda I)(\zeta_n^{(1)})+(E-\lambda I)(\zeta_n^{(2)})
& = &   (E-\lambda I)(\zeta_n) - \int_\Pi \zeta_n^{(1)} \GG\zeta_n^{(2)} \nonumber \\
& \to & S_\lambda
\quad \mbox{as } n\to\infty.
\label{eq4}
\end{eqnarray}

Let $\zeta_n^{{(1)}*}$, $\zeta_n^{{(2)}*}$ denote the Steiner
symmetrizations of $\zeta_n^{(1)}$, $\zeta_n^{(2)}$ about the $x_2$-axis.
Then we have
\begin{equation}
(E-\lambda I)(\zeta_n^{(i)*})
\geq (E-\lambda I)(\zeta_n^{(i)}), \quad i=1,2,
\label{eq20}
\end{equation}
by Riesz's rearrangement inequality in conjunction with formula (\ref{eq3})
for $G$, and the symmetrization-invariance of $I$.

From the estimate of Lemma \ref{lm6}(ii) (in case $p=\infty$ replacing $p$ by
any $2<p<\infty$), there is a positive
number $k$ such that $\GG\zeta(x)-\lambda x_2>0$ only if $x\in (-k,k)
\times (0,Z)$, uniformly over Steiner symmetric $\zeta \in
\mathcal{R}_+(\zeta_0)$.
Let
\[
\zeta_n^{(i)**}=\zeta_n^{(i)*}1_{\{x\mid \GG\zeta_n^{(i)*}(x)-\lambda x_2
>0\}}, \quad i=1,2.
\]
Then from (\ref{eq20}) and Lemma \ref{lm3}(ii) we have
\begin{equation}
(E-\lambda I) (\zeta_n^{(i)**}) \geq (E-\lambda I)(\zeta_n^{(i)*})
\geq (E-\lambda I)(\zeta_n^{(i)}), \quad i=1,2.
\label{eq19}
\end{equation}
Moreover we have
\[
\mbox{suppt}(\zeta_n^{(i)**})  \subset  \{ x \mid -k<x_1<k \}, \quad i=1,2.
\]
Now define
\[
\zeta_n^{{(1)}***}(x) = \zeta_n^{{(1)}**}(x_1-k,x_2),
\]
\[
\zeta_n^{(2)***}(x) = \zeta_n^{(2)**}(x_1+k,x_2),
\]
which are supported in $(0,2k) \times (0,Z)$ and $(-2k,0) \times
(0,Z)$ respectively. Then, from (\ref{eq19}),
\[
(E-\lambda I)(\zeta_n^{(i)***}) = (E-\lambda I)(\zeta_n^{(i)**})
\geq (E-\lambda I)(\zeta_n^{(i)}), \quad i=1,2.
\]
From the definitions and \eqref{eq14} we have
$\zeta_n^{(1)***} + \zeta_n^{(2)***} \in \mathcal{R}_+(\zeta_0) \subset \overline{\mathcal{R}(\zeta_0)^\mathrm{w}}$ and
after passing to a subsequence we can assume that
$\zeta_n^{{(1)}***} \to \overline{\zeta^{(1)}}$ and
$\zeta_n^{(2)***} \to \overline{\zeta^{(2)}}$, say, weakly in $L^2$, and so
$\overline{\zeta} := \overline{\zeta^{(1)}} + \overline{\zeta^{(2)}}
\in \overline{\mathcal{R}(\zeta_0)^\mathrm{w}}$.
Now, using compactness of $\GG$ as an operator on $L^2((-2k,2k)\times(0,Z))$,
from (\ref{eq4}) we have
\begin{eqnarray*}
(E-\lambda I)(\overline{\zeta^{(1)}}) &+&
(E-\lambda I)(\overline{\zeta^{(2)}}) \\
&=&
\lim_{n\to\infty} \left( (E-\lambda I)(\zeta_n^{{(1)}***}) +
(E-\lambda I)(\zeta_n^{{(2)}***}) \right) \\
&\geq&
S_\lambda
\end{eqnarray*}
and therefore
\begin{eqnarray}
(E-\lambda I)(\overline{\zeta}) &=&
(E-\lambda I) (\overline{\zeta^{(1)}}) + (E-\lambda I)(\overline{\zeta^{(2)}})
 + \int_\Pi\overline{\zeta^{(1)}}\GG\overline{\zeta^{(2)}} \nonumber\\
&\geq& (E-\lambda I)(\overline{\zeta^{(1)}})+(E-\lambda I)(\overline{\zeta^{(2)}})
\label{eq21} \\
&\geq&S_\lambda . \nonumber
\end{eqnarray}
Therefore $\overline{\zeta} \in \Sigma_\lambda$ so, by hypothesis,
$\overline{\zeta} \in \mathcal{R}(\zeta_0)$.
It follows that
\[
\| \overline{\zeta^{(1)}} \|_2^2 + \| \overline{\zeta^{(2)}} \|_2^2 =
\| \overline{\zeta} \|^2 = \mu.
\]
Since
$\| \overline{\zeta^{(1)}} \|_2^2 \leq \alpha$ and
$\| \overline{\zeta^{(2)}} \|_2^2 \leq \beta$,
we deduce 
$\| \overline{\zeta^{(1)}} \|_2^2 = \alpha$ and
$\| \overline{\zeta^{(2)}} \|_2^2 = \beta$,
so both $\overline{\zeta^{(1)}}$ and $\overline{\zeta^{(2)}}$ are non-zero.
Hence
\[
\int_\Pi \overline{\zeta^{(1)}} \GG \overline{\zeta^{(2)}} > 0.
\]
Therefore strict inequality holds in (\ref{eq21})
contradicting the definition of $S_\lambda$.
\end{proof}

\bigskip

\noindent {\bf Proof of Theorem \ref{thm2}.}
There is no loss of generality in supposing $2<p<\infty$.

To prove (i), we first consider a
maximizing sequence $\{\zeta_n\}_{n=1}^\infty$ for $E-\lambda I$
comprising elements of $\mathcal{R}_+(\zeta_0)$ and having
the Compactness Property.
There is thus a sequence
$\{y_n\}_{n=1}^\infty$ in $\Pi$ such that
\begin{equation}
\forall \epsilon>0 \; \exists R>0 \mbox{ s.t. } \forall n \;
\|\zeta_n1_{\Pi\setminus(y_n+B_R)}\|_2^2 < \epsilon.
\label{eq17}
\end{equation}
We assume the $y_n$ to lie on the $x_2$ axis, which results in no loss of
generality in view of the invariance of $E-\lambda I$ under translations
in the $x_1$-direction.

Let $\zeta_n^0 = \zeta_n 1_{\bbR \times (0,Z)}$ and
$\zeta_n^R = \zeta_n 1_{(-R,R) \times (0,Z)}$,
where $Z$ is the number given in Lemma \ref{lm3}(i) and $R>0$ is arbitrary.
Then $\{ \zeta_n^0 \}_{n=1}^\infty$ is also a maximizing sequence by
Lemma \ref{lm3}(ii), and (\ref{eq17})
has the consequence that
\begin{equation}
\forall \epsilon>0 \; \exists R>0 \mbox{ s.t. } \forall n \;
\|\zeta_n^0 - \zeta_n^R\|_2^2 < \epsilon.
\label{eq22}
\end{equation}

After passing to a subsequence, we can suppose that
$\{ \zeta_n^0 \}_{n=1}^\infty$ converges weakly in $L^2(\Pi)$
to a limit $\zeta^0$, hence
$\zeta_n^R \to \zeta^R := \zeta^0 1_{(-R,R) \times (0,Z)}$
weakly as $n \to \infty$.
With this notation (\ref{eq22}) takes the form
\begin{equation}
\|\zeta_n^0 - \zeta_n^R\|_2 \to 0
\mbox{ as } R\to\infty, \mbox{ uniformly over } n.
\label{eq8}
\end{equation}
Now $E(\zeta_n^R) \to E(\zeta^R)$ as $n\to\infty$, for each $R$.
We have
\[
E(\zeta_n^0) = E(\zeta_n^R+(\zeta_n^0-\zeta_n^R))
= E(\zeta_n^R)+E(\zeta_n^0-\zeta_n^R)
+ \int_\Pi\zeta_n^R \GG(\zeta_n^0-\zeta_n^R),
\]
whence
\begin{equation}
|E(\zeta_n^0)-E(\zeta_n^R)|\leq \mbox{const.}\|\zeta_n^0-\zeta_n^R\|_2
\label{eq9}
\end{equation}
by Lemma \ref{lm10}, and the constant is independent of $R$ and $n$.
Now from (\ref{eq8}) and (\ref{eq9}) we have
\begin{equation}
|E(\zeta_n^0)-E(\zeta_n^R)| \to 0 \mbox{ as } R\to\infty,
\mbox{ uniformly over } n.
\label{eq7}
\end{equation}

But $E(\zeta_n^R)\to E(\zeta^R)$ as $n\to\infty$ for each fixed $R$ by weak
continuity, and $E(\zeta^R)\to E(\zeta^0)$ as $R\to\infty$ by
the Monotone Convergence Theorem, and in conjunction with (\ref{eq7})
this yields $E(\zeta_n^0) \to E(\zeta^0)$.

We have
\begin{equation}
|I(\zeta_n^0)-I(\zeta_n^R)| \leq R\|\zeta_n^0-\zeta_n^R\|_1. \label{eq10}
\end{equation}
Let $\epsilon>0$. Now
\[
|I(\zeta_n^0)-I(\zeta^0)| \leq |I(\zeta_n^0)-I(\zeta_n^R)| +
|I(\zeta_n^R)-I(\zeta^R)| + |I(\zeta^R)-I(\zeta^0)|;
\]
we may choose $R>0$ to make the first term less than $\epsilon/3$ for all $n$
by (\ref{eq8}) and (\ref{eq10}), and the last term less than
$\epsilon/3$ for all $n$ by the Monotone Convergence Theorem, then the middle
term is less than $\epsilon/3$ for all sufficiently large $n$ by
weak convergence. Hence $I(\zeta_n^0) \to I(\zeta^0)$ as $n\to\infty$.

Thus $(E-\lambda I)(\zeta_n^0) \to (E-\lambda I)(\zeta^0)$ as
$n\to\infty$, so $(E-\lambda I)(\zeta^0)=S_\lambda$. Therefore
$\zeta^0 \in \mathcal{R}(\zeta_0)$ by hypothesis, so
$\zeta_n^0 \to \zeta^0$ strongly in $L^2(\Pi)$ by uniform convexity.

Since $\zeta_n^0$ and $\zeta_n-\zeta_n^0$ are supported on disjoint sets,
\[
\|\zeta_n-\zeta_n^0\|_2^2 = \|\zeta_n\|_2^2-\|\zeta_n^0\|_2^2 \leq
\|\zeta_0\|_2^2-\|\zeta_n^0\|_2^2 \to 0,
\]
so $\zeta_n \to \zeta^0$ as $n\to\infty$.
We have thus proved that a compact maximizing sequence has a subsequence
which, after suitable translations in the $x_1$-direction, converges
strongly in $L^2(\Pi)$ to an element of $\Sigma_\lambda$.
Now Lemmas \ref{lm1}, \ref{lm4} and \ref{lm5} show that every
maximizing sequence contains a subsequence having the Compactness
Property, and (i) follows.

To prove (ii), let $\{\zeta_n\}_{n=1}^\infty$  be a maximizing
sequence for $E-\lambda I$ relative to
$\overline{\mathcal{R}(\zeta_0)^\mathrm{w}}$ comprising elements of
$\mathcal{R}_+(\zeta_0)$. Suppose that
$\mbox{dist}_2(\zeta_n,\Sigma_\lambda)\not\to0$ as $n\to\infty$.
Then, after discarding a subsequence, we can suppose
\begin{equation}
\mbox{dist}_2(\zeta_n,\Sigma_\lambda)>\delta\quad\forall n,
\label{eq11}
\end{equation}
where $\delta$ is some positive number.

But, by (i), $\{\zeta_n\}_{n=1}^\infty$ can be replaced by a
subsequence that, after suitable translations in the
$x_1$-direction, converges in $\|\cdot\|_2$ to an element of
$\Sigma_\lambda$. This contradicts the supposition (\ref{eq11}),
showing that $\mbox{dist}_2(\zeta_n,\Sigma_\lambda)\to0$ as
$n\to\infty$, proving (ii).

To prove (iii) observe that Lemma \ref{lm8} shows the existence
of maximizing sequences in $\mathcal{R}_+(\zeta_0)$, which can, by
(i), be assumed to contain subsequences converging to elements of
$\Sigma_\lambda$, which is therefore non-empty.

Finally, to prove (iv) observe that, for fixed $x_2$ and $y_2$,
formula \eqref{eq3} shows that $G(x,y)$ is a positive strictly decreasing
function of $|x_1-y_1|$ alone, so we can apply the one-dimensional case of
Lieb's analysis \cite[Lemma 3]{Lieb:Choq} of equality in Riesz's rearrangement
inequality, on pairs of lines parallel to the $x_1$-axis, to deduce that
every $\zeta \in \Sigma_\lambda$ is, after a translation, Steiner-symmetric
about the $x_2$-axis.
From Lemma \ref{lm3}(ii) we know that every  $\zeta \in \Sigma_\lambda$
is supported in the set where $\GG\zeta(x)-\lambda x_2 >0$, which is bounded
by Lemma \ref{lm2}(i) and Lemma \ref{lm6}(ii).
The functional relationship $\zeta=\varphi\circ(\GG\zeta-\lambda x_2)$,
where $\zeta\in \Sigma_\lambda$ is any element and $\varphi$ is some
({\em a priori} unknown) increasing function, is given by
\cite[Theorem 16(i)]{GRB:VP} (where it forms the first variation
condition at a maximum).
\hfill \qedsymbol

\bigskip

\section{Existence, Transport and Theorem \ref{thm1}.}
\begin{lemma}
\label{lm7}
Let $0 \leq \zeta_0 \in L^p(\Pi)$, for some $2<p\leq\infty$ and suppose
$| \mbox{suppt}(\zeta_0) | = \pi a^2$ for some $0<a<\infty$.
For $0 < \lambda < \infty$ let $S_\lambda$ be the supremum
of $E-\lambda I$ relative to
$\overline{\mathcal{R}(\zeta_0)^\mathrm{w}}$ and let
$\Sigma_\lambda$ be the set of maximizers of $E-\lambda I$ relative
to $\overline{\mathcal{R}(\zeta_0)^\mathrm{w}}$. Then, there exists
$\Lambda>0$ such that, if $0<\lambda<\Lambda$, then $\emptyset \neq
\Sigma_\lambda \subset \mathcal{R}(\zeta_0)$.
\end{lemma}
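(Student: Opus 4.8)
The plan is to show that for small $\lambda$ the supremum $S_\lambda$ is positive and attained only by genuine rearrangements, so that the hypothesis $\Sigma_\lambda\subset\mathcal{R}(\zeta_0)$ of Theorems \ref{thm1} and \ref{thm2} holds automatically. First I would establish that $S_\lambda>0$ for every $\lambda>0$: taking any fixed $\zeta\in\mathcal{R}(\zeta_0)$ supported in a thin horizontal strip far out along the $x_1$-axis forces $I(\zeta)$ small while $E(\zeta)$ stays bounded below by a positive constant depending only on $\|\zeta_0\|_2$ and the support area (self-energy of a vortex patch in the half-plane is positive); alternatively one compresses the support near the axis so $I\to 0$ while $E$ stays positive. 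Either way $S_\lambda\geq E(\zeta)-\lambda I(\zeta)>0$.

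Next I would extract quantitative information about near-maximizers. By Lemma \ref{lm3}(iii) any maximizer is supported in $\bbR\times[0,Z]$ with $Z=Z(a,\lambda,\|\zeta_0\|_2)$; but one needs a bound uniform for $\lambda$ in a bounded interval, say $0<\lambda<1$, and Lemma \ref{lm3}(i) together with Lemma \ref{lm2}(i) delivers such a uniform $Z$. On this slab, using Lemma \ref{lm9} (or Lemma \ref{lm2}), we get $E(\zeta)\leq C(a,Z)\|\zeta_0\|_2^2$ uniformly, hence $S_\lambda\leq C$ for all small $\lambda$ and also $S_\lambda\to S_0^+$ as $\lambda\to 0^+$ where $S_0=\sup E$ over $\overline{\mathcal{R}(\zeta_0)^{\mathrm w}}$ restricted to the slab. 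The point is that at $\lambda=0$ the pure energy maximization over $\overline{\mathcal{R}(\zeta_0)^{\mathrm w}}$ on a bounded slab behaves well: by the concentration-compactness machinery of Lemmas \ref{lm1}, \ref{lm4}, \ref{lm5} (whose proofs of exclusion of vanishing and dichotomy only used positivity of the supremum and the weak-closure structure, not the specific value of $\lambda$), one should obtain a maximizer, and the strict subadditivity argument shows it lies in $\mathcal{R}(\zeta_0)$ rather than strictly inside the weak closure. One then invokes the standard fact (from \cite{GRB:VP} or Burton's rearrangement theory) that a maximizer of a strictly convex functional like $E$ relative to $\overline{\mathcal{R}(\zeta_0)^{\mathrm w}}$, when it exists, must be an extreme point of that convex set, i.e. must lie in $\mathcal{R}(\zeta_0)$; this already gives $\emptyset\neq\Sigma_0\subset\mathcal{R}(\zeta_0)$, and I would argue this persists for $\lambda$ small by a perturbation/continuity argument.

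The cleanest route, and the one I would actually write, avoids a limiting argument in $\lambda$ and instead argues directly for each small $\lambda$. Fix $A>\pi a^2$ and the uniform slab height $Z$. Run the concentration-compactness dichotomy on a maximizing sequence in $\mathcal{R}_+(\zeta_0)$: vanishing is excluded exactly as in Lemma \ref{lm4} since $S_\lambda>0$; dichotomy would produce, after Steiner symmetrization and translation, a splitting $\overline\zeta=\overline{\zeta^{(1)}}+\overline{\zeta^{(2)}}$ with $(E-\lambda I)(\overline\zeta)\geq S_\lambda$, forcing $\overline\zeta\in\Sigma_\lambda$, and then the interaction term $\int\overline{\zeta^{(1)}}\GG\overline{\zeta^{(2)}}$ is strictly positive — but here is where I need $\Sigma_\lambda\subset\mathcal{R}(\zeta_0)$, which is precisely what I am trying to prove, so I cannot reuse Lemma \ref{lm5} verbatim. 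Instead I would note that $\overline\zeta\in\overline{\mathcal{R}(\zeta_0)^{\mathrm w}}$ always satisfies $\|\overline\zeta\|_2\leq\|\zeta_0\|_2$, with equality iff $\overline\zeta\in\mathcal{R}(\zeta_0)$; since $\|\overline{\zeta^{(1)}}\|_2^2+\|\overline{\zeta^{(2)}}\|_2^2\leq\alpha+\beta=\|\zeta_0\|_2^2$ and strict inequality in (\ref{eq21}) would contradict maximality, we get $\overline\zeta\in\Sigma_\lambda$ with the interaction term zero, which by positivity of $G$ forces one of $\overline{\zeta^{(1)}},\overline{\zeta^{(2)}}$ to vanish — contradicting $0<\alpha,\beta$. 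Hence compactness holds, a maximizer exists, and the argument of Theorem \ref{thm2}(i) shows $\zeta^0\in\mathcal{R}(\zeta_0)$ directly from $\|\zeta^0\|_2=\|\zeta_0\|_2$. The main obstacle is exactly this circularity: isolating the purely measure-theoretic fact that any element of $\Sigma_\lambda$ automatically has full $L^2$-norm, hence lies in $\mathcal{R}(\zeta_0)$, without presupposing it — and I expect this follows from strict convexity of $E$ along segments in the convex set $\overline{\mathcal{R}(\zeta_0)^{\mathrm w}}$ combined with Lemma \ref{lm8}, which equates the suprema over all four rearrangement classes, so that a maximizer in the weak closure can be compared with nearby genuine rearrangements of strictly larger norm unless it already has maximal norm. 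Finally, to pin down $\Lambda$: one must check the uniform slab bound $Z$ and the positivity of $S_\lambda$ degrade gracefully, so any $\lambda$ below a threshold depending only on $a$ and $\|\zeta_0\|_2$ works; a quantitative choice comes from demanding $\lambda I(\zeta)<E(\zeta)$ for the explicit competitor used to prove $S_\lambda>0$.
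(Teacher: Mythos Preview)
Your proposal has a genuine gap and misses the paper's approach entirely. You correctly identify the circularity problem in re-running Lemmas \ref{lm4}--\ref{lm5}, but your attempted escape fails: in the dichotomy branch you only know $\|\overline{\zeta^{(i)}}\|_2^2\leq\alpha,\beta$ from weak lower semicontinuity, not equality, so $\overline{\zeta^{(2)}}=0$ is perfectly compatible with $\beta>0$ (mass can drop under weak limits). Your argument that compression near the axis keeps $E$ bounded below while $I\to 0$ is also wrong: Lemma~\ref{lm2}(ii) shows $\GG\zeta=O(x_2^\beta)$ near the axis, so $E\to 0$ as well; in fact $S_\lambda>0$ can genuinely fail for large $\lambda$ (Lamb's vortex has $0\in\Sigma_\lambda$). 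What is actually needed, and what the paper uses, is the much stronger fact that $S_\lambda\to\infty$ as $\lambda\to 0$, obtained by translating $\zeta_0$ far in the $x_2$-direction so that $E\to\infty$.

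The paper's proof avoids concentration-compactness altogether and is a direct first-variation improvement argument. Any $\overline\zeta\in\Sigma_\lambda$ maximizes the strictly convex functional $E-\lambda I$ on the convex set $\overline{\mathcal{R}(\zeta_0)^{\mathrm w}}$, hence is an extreme point, hence by Douglas's characterisation lies in $\mathcal{RC}(\zeta_0)$. Now one forces $S_\lambda>M$ for a specific constant $M$ depending only on $a$ and $\|\zeta_0\|_p$; this makes $m:=\sup_x(\tfrac12\GG\overline\zeta(x)-\lambda x_2)$ large, and the uniform gradient bound of Lemma~\ref{lm6}(i) then guarantees that the region $\{\GG\overline\zeta-\lambda x_2>0\}$ contains a half-disc of radius $2a$, so has area strictly exceeding $\pi a^2$. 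If $\overline\zeta$ were a \emph{proper} curtailment, one could place the missing mass inside this positive region but outside $\mathrm{suppt}(\overline\zeta)$, strictly increasing $E-\lambda I$ --- contradiction. Hence $\Sigma_\lambda\subset\mathcal{R}(\zeta_0)$; non-emptiness then follows from Theorem~\ref{thm2}(iii), whose hypothesis is now satisfied (vacuously if $\Sigma_\lambda$ were empty). You gestured at strict convexity at the very end, which is indeed the entry point, but the decisive quantitative step --- Lemma~\ref{lm6}(i) plus the largeness of $S_\lambda$ forcing the positive set of the stream function to have excess area --- is absent from your plan.
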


\begin{proof}
There is no loss of generality in supposing $2<p<\infty$.
Since $E$ is unbounded above on $\mathcal{R}(\zeta_0)$, which may be
seen by translating $\zeta_0$ away to infinity in the
$x_2$-direction, we have $S_\lambda \to\infty$ as $\lambda \to 0$.
Therefore we can choose $\Lambda>0$ such that, if
$0<\lambda<\Lambda$ then $S_\lambda>M$, where $M$ is a positive
number to be chosen later.

Consider $\lambda$ with $0<\lambda<\Lambda$, and consider
$\overline{\zeta}\in\Sigma_\lambda$.
Arguing as in Douglas \cite[Theorem 4.1]{RJD:UD}, the strict convexity of
$E-\lambda I$ ensures that $\overline{\zeta}$ is an extreme point of
$\overline{\mathcal{R}(\zeta_0)^w}$ so
$\overline{\zeta} \in \mathcal{RC}(\zeta_0) \subset \mathcal{R}_+(\zeta_0)$
by \cite[Theorem 2.1]{RJD:UD}.
Let $m:=\sup_{x\in\Pi} (\half \GG\overline{\zeta}(x)-\lambda x_2)$.
Then
\[
M<S_\lambda = \int_\Pi
\overline{\zeta}(x)\left(\frac{1}{2}\GG\overline{\zeta}(x)-\lambda x_2\right)\rd x
\leq m\|\overline{\zeta}\|_1 \leq m\|\zeta_0\|_1,
\]
so $m>M/\|\zeta_0\|_1$.
From Lemma \ref{lm6}(i) we have
\[
|\nabla \GG\overline{\zeta}(x)| \leq N\|\overline{\zeta}\|_p \quad \forall x\in\Pi 
\]
where $N$ is a positive constant independent of $\lambda$ and $M$, hence if
$x\in\Pi$ is such that $\frac{1}{2}\GG\overline{\zeta}(x)-\lambda x_2 > m/2$, and
$y\in\Pi$ is such that $|y-x|<2a$ and $y_2<x_2$, then
\[
{\textstyle
\frac{1}{2}\GG\overline{\zeta}(y) - \lambda y_2 >
\frac{1}{2}\left(\GG\overline{\zeta}(x)-2aN\|\zeta_0\|_1\right)-\lambda x_2
}
\]
\[
> \frac{m}{2} -2aN\| \zeta_0\|_1
> \frac{M}{2\|\zeta_0\|_1} - 2aN\|\zeta_0\|_1 > 0,
\]
provided we choose $M=5aN\|\zeta_0\|_1^2$; note this shows $x_2>a$
because $\frac{1}{2}\GG\overline{\zeta}(y)-\lambda y_2$ vanishes when
$y_2=0$. Hence
\[
{\textstyle
|\{y\in\Pi\mid \GG\overline{\zeta}(y)-\lambda y_2>0\}| >
|\{y\in\Pi\mid\frac{1}{2}\GG\overline{\zeta}(y)-\lambda y_2>0\}|
>2\pi a^2.
}
\]
It follows that if $\overline{\zeta}$ is a proper curtailment of a rearrangement
of $\zeta_0$, then we have the freedom to  choose $\zeta_1$ supported in
$\{y\in\Pi\mid
\GG\overline{\zeta}(y)-\lambda{y_2}>0\}\setminus\mbox{suppt}(\overline{\zeta})$
such that $\overline{\zeta}+\zeta_1\in\mathcal{R}(\zeta_0)$, and then
\[
(E-\lambda I)(\overline{\zeta}+\zeta_1) = (E-\lambda I)(\overline{\zeta}) +
E(\zeta_1) + \int_\Pi(\GG\overline{\zeta}-\lambda x_2)\zeta_1
> (E-\lambda I)(\overline{\zeta}),
\]
contradiction. So every maximizer belongs to $\mathcal{R}(\zeta_0)$.
\end{proof}

\medskip

Recall the definition of an $L^p$-regular solution given in Definition \ref{LpRegSol}.

\begin{lemma} \label{Renormalized}
Let $2<p<\infty$ and let $\zeta$ be an $L^p$-regular solution of the vorticity equation.
Let $\psi(t,x)=\GG\zeta(t,x)-\lambda x_2$ and set
$u(t,x)=\nabla^\perp\psi(t,x)-\lambda e_1$.
Suppose $\omega_0 \in L^p(\Pi)$.
Then the initial value problem for the linear transport equation
\begin{equation}
\begin{cases}
\partial_t \omega + \mathrm{div}(\omega u) =0 &\\
\omega(0) =\omega_0 &
\end{cases}
\label{eq25}
\end{equation}
has a unique weak solution $\omega \in L^\infty_\mathrm{loc}([0,\infty),L^p(\Pi))$.
Moreover, $\omega \in C([0,\infty),L^p(\Pi))$ and $\omega$ satisfies
the renormalisation property in the form $\omega(t) \in \mathcal{R}(\omega_0)$
for almost all $t > 0$.
\label{lm12}
\end{lemma}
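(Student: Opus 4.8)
The plan is to establish existence, uniqueness, and the renormalisation property for the linear transport equation \eqref{eq25} by exploiting the regularity of the ``frozen'' velocity field $u$ coming from the given $L^p$-regular solution $\zeta$. The key point is that, although $\zeta(t,\cdot)$ is only in $L^p$, Lemma \ref{lm6}(i) tells us that $\nabla\GG\zeta(t,\cdot)$ is bounded in $L^\infty$, with a bound controlled by $\|\zeta(t,\cdot)\|_p$, which in turn is bounded on compact time intervals since $\zeta$ is $L^p$-regular. More is needed for uniqueness: I would first record that $u$ is (log-Lipschitz, hence) sufficiently regular in $x$ locally and measurable in $t$, so that the ODE $\dot X = u(t,X)$ has a well-defined flow. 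The cleanest route is the DiPerna--Lions theory of renormalised solutions for linear transport equations with Sobolev (or even just $BV$, bounded, divergence-free) coefficients: since $\mathrm{div}\,u = \mathrm{div}(\nabla^\perp\psi) = 0$ and $u \in L^\infty_{\mathrm{loc}}([0,\infty); W^{1,q}_{\mathrm{loc}})$ for appropriate $q$ (indeed $u$ is bounded with bounded gradient on compact time intervals by Lemma \ref{lm6}(i) together with a standard Calder\'on--Zygmund / potential estimate giving $D^2\GG\zeta \in L^q$ from $\zeta \in L^p$), the DiPerna--Lions theorem yields existence and uniqueness of a weak solution $\omega \in C([0,\infty); L^p(\Pi))$ and the renormalisation property: for every $\beta \in C^1(\bbR)$ with $\beta'$ bounded, $\beta(\omega)$ is again a weak solution of the transport equation with the same $u$.

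Next I would convert the renormalisation property into the statement $\omega(t) \in \mathcal{R}(\omega_0)$ for a.e.\ $t$. Because $\mathrm{div}\,u = 0$, the flow of $u$ is measure-preserving, so formally $\omega(t)$ is a rearrangement of $\omega_0$; to make this rigorous from renormalisation, apply it with $\beta = \beta_\alpha$ a smooth approximation of $s \mapsto (s-\alpha)_+$ (and, dealing with boundary effects on $\Pi$, truncations in space), integrate the transport equation for $\beta_\alpha(\omega)$ against a spatial test function that is $\equiv 1$ on an expanding family of sets, and pass to the limit to obtain $\int_\Pi (\omega(t)-\alpha)_+ = \int_\Pi (\omega_0-\alpha)_+$ for all $\alpha > 0$ and a.e.\ $t$. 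Together with conservation of $\|\omega(t)\|_1$ (the case $\alpha = 0$, or equivalently testing the transport equation itself), this equality of all the ``layer-cake'' integrals is equivalent to equimeasurability, i.e.\ $\omega(t) \in \mathcal{R}(\omega_0)$; here I use non-negativity of $\omega_0$, which propagates to $\omega(t)$ by applying renormalisation with $\beta$ approximating $s \mapsto s_-$. A mild technical point is that $\Pi$ is unbounded and has a boundary, so I must check that no mass escapes to the boundary $\{x_2=0\}$ or to spatial infinity; this follows because $u$ is bounded on compact time intervals (so the flow moves points a bounded distance in bounded time) and because, near $\{x_2 = 0\}$, the tangency of $\nabla^\perp\psi$ to the boundary — $\GG\zeta$ and hence $\psi - (-\lambda x_2)$ vanishes on $x_2 = 0$ — keeps particle trajectories in $\Pi$.

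The main obstacle I expect is the rigorous justification of the DiPerna--Lions framework in this specific geometry: verifying that $u$ has enough Sobolev regularity uniformly on compact time intervals (which needs the second-derivative estimate $D^2\GG\zeta \in L^q_{\mathrm{loc}}$ from $\zeta \in L^p$, not just the first-derivative bound of Lemma \ref{lm6}(i)), and handling the half-plane boundary correctly — either by a reflection argument extending $\zeta$ oddly across $\{x_2 = 0\}$ so that the problem becomes one on all of $\bbR^2$ with the Biot--Savart kernel of the plane (recall from the introduction that vortex pairs are exactly the odd-symmetric vorticity dynamics), or by citing a boundary version of the renormalisation theory. The reflection approach is probably the least painful: extend $\omega_0$ and $\zeta$ oddly, note that the extended velocity field is tangent to $\{x_2 = 0\}$ and has the required regularity on $\bbR^2$, apply the standard DiPerna--Lions results there, and then restrict back to $\Pi$, using uniqueness to identify the restriction with the desired solution and oddness to transfer equimeasurability from $\bbR^2$ to $\Pi$. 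Everything else — continuity in time, uniqueness, the layer-cake computation — is then routine.
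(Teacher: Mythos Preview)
Your plan is essentially the paper's own proof: odd reflection of $\zeta$, $\psi$, $\omega_0$ across $\{x_2=0\}$ to reduce to $\bbR^2$, elliptic regularity to place $u$ in $L^\infty_{\mathrm{loc}}([0,\infty);W^{1,p}_{\mathrm{loc}})$ with $\mathrm{div}\,u=0$, invocation of DiPerna--Lions for existence, uniqueness, continuity in $L^p$, and renormalisation, and then testing $\partial_t(\beta\circ\omega)+\mathrm{div}((\beta\circ\omega)u)=0$ against spatial cutoffs $\varphi_R$ to obtain conservation of the distribution function.

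One correction: you invoke Lemma~\ref{lm6}(i) to bound $\nabla\GG\zeta$ in $L^\infty$, but that lemma requires $\zeta$ to vanish outside a set of prescribed finite area, and an $L^p$-regular solution need not have this property. The paper instead uses Lemma~\ref{lm9} (which only needs $\|\zeta\|_1$, $\|\zeta\|_2$, $I(|\zeta|)$ finite, all conserved) to bound $\|\GG\zeta\|_\infty$, and then Agmon's interior $W^{2,p}$ estimate on unit balls to control $\nabla\GG\zeta$; your Calder\'on--Zygmund alternative would also serve once you supply an a~priori bound on $\GG\zeta$ or $\nabla\GG\zeta$ in some $L^q$ to anchor the local Sobolev embedding. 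Also note that the lemma does not assume $\omega_0\geq 0$; the paper handles this by applying the distribution-function argument to the positive and negative parts separately, which your layer-cake argument accommodates with the obvious modification.
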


\begin{proof}
We begin by extending $\psi$ to the whole of $\bbR^2$ as a function odd in
$x_2$, which is accomplished by allowing arbitrary $x \in \bbR^2$ in the
formula \eqref{eq2}; then extending $\zeta$ to $\bbR^2$ as a function odd in
$x_2$ gives $-\Delta \psi = \zeta$ throughout $\bbR^2$ and we take
$u=\nabla^\perp \psi + \lambda e_1$ throughout $\bbR^2$.
Similarly we extend $\omega_0$ to $\bbR^2$ as a function odd in $x_2$.
Write $\psi_0(x)=\psi(x)+\lambda x_2$.

Now $\| \psi_0(t,\cdot) \|_\infty$ is bounded by Lemma \ref{lm9} and it then
follows from elliptic regularity theory, specifically Agmon
\cite[Thm. 6.1]{SAg}, that $\| \psi_0(t,\cdot) \|_{2,p,B}$ is bounded over all
unit balls $B$ uniformly over every bounded interval of $t$.
Hence $\| u(t,\cdot) \|_{\infty}$ is bounded on every bounded interval of $t$.

The DiPerna-Lions theory of transport equations
\cite[Thm. II.2, Cor. II.1 and Cor. II.2]{RJDiP:PLL}
assures us of the existence of a unique solution
$\omega \in L^\infty_\mathrm{loc}([0,\infty),L^p(\bbR^2))$ to \eqref{eq25},
that $\omega \in C([0,\infty),L^p(\bbR^2))$ and that $\omega$
has the renormalisation property
\[
\partial_t (\beta\circ\omega) + \mathrm{div}((\beta\circ\omega)u) =0
\]
for every $\beta \in C^1(\bbR)$ that satisfies
$|\beta^\prime(s)| \leq \mathrm{const.}(1+|s|^{p/2})$.
Moreover $\omega$ is odd in $x_2$ from the uniqueness.

Now consider a test function of the form $\chi(t)\varphi_R(x)$ where
$\chi \in  \mathscr{D}(\bbR)$ and $\varphi_R \in \mathscr{D}(\bbR^2)$
satisfies $0 \leq \varphi_R \leq 1$ everywhere, $\varphi_R(x)=1$ if $|x|<R$,
$\varphi_R(x)=0$ if $|x|>2R$ and $|\nabla\varphi_R| <2/R$ everywhere.
Then, for any $\beta$ as above,
\begin{equation}
\int_{\bbR^2}\int_{\bbR} \chi^\prime(t) \varphi_R(x) \beta(\omega(t,x))\rd t\rd x 
+ \int_{\bbR^2}\int_{\bbR}
\chi(t) \nabla\varphi_R(x) \cdot u(t,x) \beta(\omega(t,x))\rd t\rd x
= 0 .
\label{eq26}
\end{equation}
We now suppose further that $|\beta(s)| \leq \mathrm{const.}|s|^p$ for all $s$,
choose $\sigma>2$ such that $1/2+1/p+1/\sigma=1$ and deduce
\begin{eqnarray*}
\left| \int_{\bbR^2} \beta(\omega(t,x)) \nabla\varphi_R(x) \cdot u(t,x) \rd x \right|
& \leq &
\|\beta(\omega(t,\cdot)\|_p \| u(t,\cdot) \|_2 \|\nabla \varphi_R \|_\sigma \\
& \leq &
\mathrm{const.}\|\omega(t,\cdot)\|_p \| u(t,\cdot) \|_2 R^{2/\sigma -1}\\
& \to & 0 \mbox{ as } R \to \infty
\mbox{ uniformly over } t \in \mathrm{suppt} \chi.
\end{eqnarray*}
From \eqref{eq26} we now have
\[
\int_{\bbR^2} \beta(\omega(t,x)) \rd x = \mathrm{const.}
\]
for each $\beta$; taking $\beta$ to be a mollification of $1_{[\alpha,\infty)}$
for $\alpha>0$ we deduce that the positive part of $\omega(t)$ is a
rearrangement of the positive part of $\omega_0$ and similarly for
the negative parts.
\end{proof}

\noindent
{\bf Remark} Note that it follows from Lemma \ref{lm12}, in particular, that $\|\zeta(t,\cdot)\|_p$
and $\|\zeta(t,\cdot)\|_1$ are conserved if $\zeta$ is an $L^p$-regular
solution.

We also observe that a version of Lemma \ref{Renormalized}, in the case
$\lambda = 0$, was established in \cite[Proposition 1]{MLN:Enstrf}.

\noindent {\bf Proof of Theorem \ref{thm1}.}
There is no loss of generality in supposing $2<p<\infty$.
Choose $Z>0$ such that
$\GG\omega(x)-\lambda x_2<0$ for $x_2>Z$ provided
\begin{equation}
\left.
\begin{array}{rcl}
\omega &\geq& 0\\
|\mbox{suppt}(\omega)| &<& A\\
\|\omega\|_2 &\leq& \|\zeta_0\|_2+1 \\
I(\omega) &\leq& \sup I(\Sigma_\lambda)+1
\end{array}
\right\}
\label{eq23}
\end{equation}
by Lemma \ref{lm9}.
Write
\[
\widetilde{\omega} = \omega 1_{\bbR\times(0,Z)}.
\]

Recall the notation $S_{\lambda}$ introduced in Theorem \ref{thm2}, as the supremum of $E-\lambda I$ relative to $\overline{\mathcal{R}(\zeta_0)^\mathrm{w}}$.

Then we have
\[
(E-\lambda I)(\widetilde{\omega}) \geq (E-\lambda I)(\omega)
\]
and
\[
(E-\lambda I)(\omega) \to S_\lambda \quad \mbox{as} \quad
\mbox{dist}_Y(\omega,\Sigma_\lambda) \to 0,
\]
for $\omega$ satisfying (\ref{eq23}), by Lemma \ref{lm10}.

Suppose, to seek a contradiction, that
$\{\omega^n(\cdot)\}_{n=1}^\infty$ are non-negative $L^p$-regular solutions of
the vorticity equation (\ref{eq1}) for which
\[
\mbox{dist}_Y(\omega^n(0),\Sigma_\lambda)\to0
\]
as $n\to\infty$, but
\begin{equation}
\sup_{t>0}\mbox{dist}_2(\omega^n(t),\Sigma_\lambda)
> \theta \quad \forall n,
\label{eq15}
\end{equation}
where $\theta>0$. For each $n$ choose $t_n>0$ such that
\begin{equation}
\mbox{dist}_2(\omega^n(t_n),\Sigma_\lambda) >\theta,
\label{eq16}
\end{equation}
and choose $\zeta^n_0 \in \Sigma_\lambda$ such that
\[
\| \zeta^n_0 - \omega^n(0) \|_2 \to 0 \mbox { as } n \to \infty.
\label{eq24}
\]
In view of Theorem \ref{thm2}(i), after translating in the $x_1$-direction,
and passing to a subsequence, we may additionally suppose that 
$\{ \zeta^n_0 \}_{n=1}^\infty$ converges in $L^2(\Pi)$ to a limit
in $\Sigma_\lambda$. Re-assigning the label $\zeta_0$ we shall suppose
\[
\zeta^n_0 \to \zeta_0 \in \Sigma_\lambda \mbox{ as } n \to \infty
\mbox{ in } \| \cdot \|_2 .
\] 

Now
\begin{equation}
\inf_{t>0} (E-\lambda I)(\widetilde{\omega}^n(t)) \geq S_\lambda -o(1)
\mbox{ as } n\to\infty. \label{eq13}
\end{equation}
Let $\zeta(\cdot)=\zeta^n(\cdot)$ be the solution of the transport
equation
\[
\begin{cases}
\partial_t \zeta + \mathrm{div}( \zeta u ) = 0, & \\
u = \lambda e_1  + \nabla^\perp \GG\omega^n &
\end{cases}
\]
with initial data $\zeta^n_0$. Then, continuing to use ~$\widetilde{}$~ for
restriction to $\bbR \times (0,Z)$, we have
\begin{eqnarray*}
|I(\widetilde{\zeta}^n(t))-I(\widetilde{\omega}^n(t))| & \leq &
Z\|\widetilde{\zeta}^n(t)-\widetilde{\omega}^n(t)\|_1\\
& \leq & Z\|\zeta^n_0-\omega^n(0)\|_1\\
& \leq & Z(\pi a^2+A)^{1/2}\|\zeta^n_0-\omega^n(0)\|_2
\end{eqnarray*}
and
\[ \|\widetilde{\zeta}^n(t)-\widetilde{\omega}^n(t)\|_2 \leq
\|\zeta^n(t)-\omega^n(t)\|_2 = \|\zeta^n_0-\omega^n(0)\|_2,
\]
hence
\begin{eqnarray}
|E(\widetilde{\zeta}^n(t))-E(\widetilde{\omega}^n(t))| & \leq &
\mbox{const.}\|\widetilde{\zeta}^n(t)-\widetilde{\omega}^n(t)\|_2 \nonumber\\
& \leq & \mbox{const.}\|\zeta^n_0-\omega^n(0)\|_2. \label{eq12}
\end{eqnarray}
by Lemma \ref{lm10}. It now follows from (\ref{eq13}) and
(\ref{eq12}) that $\{\widetilde{\zeta}^n(t_n)\}_{n=1}^\infty$ is a
maximizing sequence for $E-\lambda I$ relative to
$\overline{\mathcal{R}(\zeta_0)^\mathrm{w}}$. It follows from
Theorem \ref{thm2} that
\[
\mbox{dist}_2(\widetilde{\zeta}^n(t_n),\Sigma_\lambda)\to0
\]
as $n\to\infty$.

From this it follows that
\[
\|\widetilde{\zeta}^n(t_n)-\zeta^n(t_n)\|_2\to0,
\]
since the functions $\widetilde{\zeta}^n(t_n) - \zeta^n(t_n)$ and
$\widetilde{\zeta}^n(t_n)$ have disjoint supports and are therefore
orthogonal in $L^2$, so
\[
\|\widetilde{\zeta}^n(t_n)-\zeta^n(t_n)\|_2^2 =
\|\zeta^n(t_n)\|_2^2-\|\widetilde{\zeta}^n(t_n)\|_2^2 =
\|\zeta_0\|_2^2-\|\widetilde{\zeta}^n(t_n)\|_2^2 \to 0.
\]
Hence
\[
\mbox{dist}_2(\zeta^n(t_n),\Sigma_\lambda) \to 0.
\]
Since
\[
\|\zeta^n(t_n)-\omega^n(t_n)\|_2 = \|\zeta^n_0-\omega^n(0)\|_2 \to 0
\]
we deduce
\[
\mbox{dist}_2(\omega^n(t_n),\Sigma_\lambda) \to 0,
\]
and this contradicts the choices of $\theta$ and $t_n$ made in
(\ref{eq15}) and (\ref{eq16}).
\hfill\qedsymbol

\section{Further Remarks.}
If we only consider perturbations formed by adding non-negative
vorticity to a maximizer then we can prove the following variant
of Theorem \ref{thm1} concerning stability in $\|\cdot\|_Y$:

\begin{theorem}
\label{thm3} Let $\zeta_0$ be a non-negative function whose support
has finite positive area $\pi a^2$ ($a>0$) in the half-plane $\Pi$,
suppose $\zeta_0 \in L^p(\Pi)$ for some $2<p\leq \infty$, and suppose
$\lambda>0$. Let $\Sigma_\lambda$ denote the set of maximizers of
$E-\lambda I$ relative to
$\overline{\mathcal{R}(\zeta_0)^\mathrm{w}}$, and suppose
$\emptyset\neq\Sigma_\lambda\subset\mathcal{R}(\zeta_0)$. Then
$\Sigma_\lambda$ is orbitally stable, in the sense that, for every
$\epsilon> 0$ and $A>\pi a^2$, there exists $\delta>0$ such that, if
$\omega(0)$ satisfies $\omega(0)\geq\zeta_0$ for some element of
$\Sigma_\lambda$, again denoted $\zeta_0$, and if 
$\mbox{dist}_Y(\omega(0),\Sigma_\lambda)<\delta$ and
$|\mbox{suppt}(\omega(0))|<A$, then for all $t\in\bbR$, then we have
$\mbox{dist}_Y(\omega(t),\Sigma_\lambda)<\epsilon$, whenever
$\omega(t)$ denotes an $L^p$-regular solution of (\ref{eq1}) with initial data
$\omega(0)$.
\end{theorem}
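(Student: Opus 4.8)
\textbf{Proof proposal for Theorem \ref{thm3}.}

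The plan is to run essentially the same argument as in the proof of Theorem \ref{thm1}, but to exploit the extra hypothesis $\omega(0) \geq \zeta_0$ (for some $\zeta_0 \in \Sigma_\lambda$) to upgrade the conclusion from $\|\cdot\|_2$-closeness to $\|\cdot\|_Y$-closeness, i.e. to control $|I(\omega(t)) - I(\zeta)|$ for a nearby maximizer $\zeta$ as well. First I would observe that under evolution by \eqref{eq1} the quantity $I(\omega(t))$ is conserved (since $\omega$ is an $L^p$-regular solution), and likewise $\|\omega(t)\|_1$ and $\|\omega(t)\|_p$ are conserved by the Remark following Lemma \ref{Renormalized}; moreover, since $\omega(0) \geq \zeta_0$ pointwise and $\zeta_0 \in \mathcal{R}(\zeta_0)$, the renormalisation/transport structure (one can transport $\zeta_0$ by the velocity field $u$ of $\omega$, as in Lemma \ref{lm12}) keeps a rearrangement of $\zeta_0$ sitting below $\omega(t)$, so in particular $I(\omega(t)) \geq I(\text{that rearrangement}) \geq \inf I(\Sigma_\lambda)$ stays under control. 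The point is that the support bound $|\mbox{suppt}(\omega(t))| < A$, the $\|\cdot\|_2$ bound, and the $I$ bound in \eqref{eq23} all persist for all $t$, so the cut-off height $Z$ is legitimate uniformly in $t$, exactly as before.

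The core of the argument then proceeds verbatim as in the proof of Theorem \ref{thm1}: arguing by contradiction, one takes a sequence of solutions $\omega^n$ with $\mbox{dist}_Y(\omega^n(0),\Sigma_\lambda)\to 0$ but (now) $\sup_{t}\mbox{dist}_Y(\omega^n(t),\Sigma_\lambda) > \theta$; one picks times $t_n$ with $\mbox{dist}_Y(\omega^n(t_n),\Sigma_\lambda) > \theta$, transports a nearby maximizer $\zeta^n_0 \in \Sigma_\lambda$ by the velocity field of $\omega^n$ to obtain $\zeta^n(t)$, truncates to $\bbR\times(0,Z)$, and uses Lemma \ref{lm10} plus the conservation of $E-\lambda I$ along $\zeta^n$ to show $\{\widetilde{\zeta}^n(t_n)\}$ is a maximizing sequence; Theorem \ref{thm2} then gives $\mbox{dist}_2(\widetilde{\zeta}^n(t_n),\Sigma_\lambda)\to 0$, hence (by the disjoint-support/orthogonality trick) $\mbox{dist}_2(\omega^n(t_n),\Sigma_\lambda)\to 0$. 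This already kills any $\|\cdot\|_2$-discrepancy; what remains is to control the impulse discrepancy. Here I would use that $I$ is conserved along the Euler flow, so $I(\omega^n(t_n)) = I(\omega^n(0)) \to I(\zeta_0)$ (the limit of the $\zeta^n_0$); combined with $\mbox{dist}_2(\omega^n(t_n),\Sigma_\lambda)\to 0$ and the fact, from Theorem \ref{thm2}(i), that $\Sigma_\lambda$ is compact modulo $x_1$-translations — and $I$ is invariant under $x_1$-translations and continuous on the relevant bounded, uniformly-supported family in $\|\cdot\|_2$ (since on sets of bounded area $\|\cdot\|_1 \lesssim \|\cdot\|_2$ and $I(\xi) \leq Z\|\xi\|_1$ after truncation) — one gets that the $L^2$-limit point $\bar\zeta \in \Sigma_\lambda$ of $\omega^n(t_n)$ satisfies $I(\bar\zeta) = \lim I(\omega^n(t_n)) = I(\zeta_0)$. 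Hence $\mbox{dist}_Y(\omega^n(t_n),\Sigma_\lambda) \to 0$, contradicting the choice of $t_n$ and $\theta$.

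The main obstacle is the interplay between the impulse and the non-negativity hypothesis: one must make sure the ordering $\omega(0) \geq \zeta_0$ is actually used correctly to guarantee that the uniform bounds \eqref{eq23} (in particular the bound on $I(\omega(t))$ from above, which is needed to fix $Z$) hold for all $t$, and that conservation of $I$ along the $L^p$-regular solution is legitimate in this setting — the latter is exactly the content of the Definition \ref{LpRegSol} of $L^p$-regular solution and the Remark after Lemma \ref{Renormalized}. The subtlety is that, unlike in Theorem \ref{thm1} where only $\mbox{dist}_2$ had to be controlled and $I$ could drift into the tail $x_2 > Z$, here we are claiming $\mbox{dist}_Y$ stays small, which forces us to argue that essentially none of the impulse escapes to large $x_2$: this is where $\omega(0) \geq \zeta_0$ enters, since the maximizer $\zeta_0$ carries (after transport) a definite, controlled amount of impulse that cannot concentrate near $x_2 = Z$, and the remaining part $\omega(t) - (\text{transported }\zeta_0)$ is small in $\|\cdot\|_2$ hence in $\|\cdot\|_1$ hence contributes little to $I$; combined with conservation of total $I$, the impulse is pinned down. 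Making this escape-of-impulse control rigorous, uniformly in $t$, is the delicate point; everything else is a transcription of the proof of Theorem \ref{thm1}.
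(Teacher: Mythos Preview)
Your outline correctly reduces the problem to the proof of Theorem~\ref{thm1} plus a final step controlling the impulse, and you correctly identify that this last step is the only new content. But the mechanism you propose for that step does not work, and the paper fills the gap by a different idea that you do not mention.

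The failure is in your passage from $\mathrm{dist}_2(\omega^n(t_n),\Sigma_\lambda)\to 0$ to $\mathrm{dist}_Y(\omega^n(t_n),\Sigma_\lambda)\to 0$. You argue that the $L^2$-limit point $\bar\zeta\in\Sigma_\lambda$ satisfies $I(\bar\zeta)=\lim I(\omega^n(t_n))$ because $I$ is ``continuous on the relevant bounded, uniformly-supported family in $\|\cdot\|_2$''. But $\omega^n(t_n)$ is \emph{not} supported in $\bbR\times(0,Z)$; only the elements of $\Sigma_\lambda$ are. Your fallback, that $\omega^n(t)-\zeta^n(t)$ is ``small in $\|\cdot\|_2$ hence in $\|\cdot\|_1$ hence contributes little to $I$'', is simply false: a function with small $L^1$-norm supported at large $x_2$ can carry arbitrarily large impulse. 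Conservation of total $I$ does not rescue this, because the $\mathrm{dist}_Y$ hypothesis at $t=0$ only places $\omega^n(0)$ close in $Y$ to \emph{some} $\tau_n\in\Sigma_\lambda$, not necessarily to the $\zeta^n_0$ you transport, and elements of $\Sigma_\lambda$ need not share a common value of $I$.

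The paper's argument avoids any direct tail-impulse estimate. Having obtained $\mathrm{dist}_2(\omega^n(t_n),\Sigma_\lambda)\to 0$, one picks $\sigma_n\in\Sigma_\lambda$ with $\|\omega^n(t_n)-\sigma_n\|_2\to 0$ and (after translating) $\sigma_n\to\sigma_0$. Lemma~\ref{lm10} then gives $E(\omega^n(t_n))\to E(\sigma_0)$, which is legitimate because $I(\omega^n(t_n))=I(\omega^n(0))$ is uniformly bounded by conservation and the $\mathrm{dist}_Y$ hypothesis at $t=0$. Since also $(E-\lambda I)(\omega^n(t_n))=(E-\lambda I)(\omega^n(0))\to S_\lambda=(E-\lambda I)(\sigma_0)$, subtraction yields $I(\omega^n(t_n))\to I(\sigma_0)$. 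This is the missing step: recover $I$ \emph{algebraically} from $E$ and $E-\lambda I$, not by a support argument. The hypothesis $\omega(0)\geq\zeta_0$ is used by the paper at an earlier stage, to obtain the sign $I(\zeta^n(t_n))\leq I(\omega^n(t_n))$ (since transport by the same field preserves the pointwise ordering), which lets one show directly that the \emph{untruncated} $\{\zeta^n(t_n)\}$ is a maximizing sequence, bypassing the $\widetilde{\ }$-truncation of Theorem~\ref{thm1}.
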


\begin{proof}
We indicate the modifications that should be made to the proof of
Theorem \ref{thm1}.
We have
$I(\zeta^n(t_n))-I(\omega^n(t_n))\leq 0$. Therefore
\begin{eqnarray*}
S_\lambda - (E-\lambda I)(\omega^n(t_n) &\geq& (E-\lambda
I)(\zeta^n(t_n)-(E-\lambda I)(\omega^n(t_n))\\
&\geq& E(\zeta^n(t_n))-E(\omega^n(t_n)).
\end{eqnarray*}
Now $E(\zeta^n(t_n))-E(\omega^n(t_n)) \to 0$ by Lemma \ref{lm10},
whereas, by conservation of the impulse and energy of $\omega^n(t)$,
we have
\[
(E-\lambda I)(\omega^n(t_n))=(E-\lambda I)(\omega^n(0)) \to
S_\lambda,
\]
using Lemma \ref{lm10}.

What is now required is to choose a maximizer $\sigma_n$ close to
$\omega^n(t_n)$ in $\|\cdot\|_2$, and then after taking a
subsequence, and suitably translating the $\sigma_n$ in the
$x_1$-direction to $\sigma_n^\prime$, obtain convergence in
$\|\cdot\|_2$, say to $\sigma_0$. Then $E(\omega^n(t_n))\to
E(\sigma_0)$ and
\[
(E-\lambda I)(\omega^n(t_n))\to S_\lambda=(E-\lambda I)(\sigma_0)
\]
so $I(\omega^n(t_n)\to I(\sigma_0)$. For large $n$, we then have a
contradiction to the choice of $\theta$ and $t_n$ in (\ref{eq14})
and (\ref{eq15}), which in this case would have been
\[
\sup_{t>0}\mbox{dist}_Y(\omega^n(t),\Sigma_\lambda)
>\theta, \quad \mbox{dist}_Y(\omega^n(t_n),\Sigma_\lambda) >\theta.
\]
Hence $\sup_t\mbox{dist}_Y(\omega(t),\Sigma_\lambda) \to 0$ as
$\mbox{dist}_Y(\omega(0),\Sigma_\lambda) \to 0$, as desired.
\end{proof}

We conclude this article with some final remarks. Although we describe
our result as a nonlinear stability theorem, it falls short of what one would
desire because we only show that, for a class of steady vortex pairs $\zeta_0$, the 
perturbed trajectories stay close to the set $\Sigma_{\lambda}(\zeta_0)$,
but not necessarily to the orbit of the unperturbed steady wave 
$\{\zeta_0(\cdot-(\lambda t,0)), t \in \mathbb{R} \}$. In consequence,
the most natural problem raised by this work is to further investigate the structure of 
$\Sigma_{\lambda}$.
Another issue that bears further scrutiny is that the notions of closeness employed for the 
perturbation of initial vorticity and the change in the evolving vorticity are slightly different.
An extension of this work in any way which would include Lamb's
circular vortex-pair (the one case where we have a precise characterization of
$\Sigma_{\lambda}$) would also be very interesting.   

\vspace{1cm}

{\small {\it Acknowledgments.} The first author expresses his appreciation
of the hospitality of the University of Campinas where this work was begun.
The second author was partially supported by CNPq grant \# 303089/2010-5 and
the third author was partially supported by CNPq grant \# 306331/2010-1.
This work was supported in part by FAPESP grants \# 06/51079-2 and
\# 07/51490-7.}

\bibliographystyle{plain}

\begin{thebibliography}{10}



\bibitem{SAg}
{\sc S. Agmon.}
\newblock {The $L_p$ approach to the Dirichlet problem}.
\newblock {\em Ann. Scuola Norm. Sup. Pisa Cl. Sci. (3)}.
{\bf 13}, 405--448 (1959). 

\bibitem{VIA:COND}
{\sc V.I. Arnol$^\prime$d.}
\newblock {Conditions for nonlinear stability of stationary plane curvilinear
flows of an ideal fluid}.
\newblock {\em Soviet Math. Doklady} {\bf 162}, 773--777 (1965).
\newblock {Translation of:}
\newblock {\em Dokl. Akad. Nauk SSSR} {\bf 162}, 975--998 (1965).

\bibitem{TBB}
{\sc T. Brooke-Benjamin.}
\newblock {The alliance of practical and analytical insights into the
nonlinear problems of fluid mechanics.}
\newblock {\em Applications of Methods of Functional Analysis to Problems
in Mechanics.}
\newblock {Lecture Notes in Mathematics {\bf 503} pp. 8--29.
Springer-Verlag, Berlin, 1976.}

\bibitem{GRB:MCF}
{\sc G.R. Burton.}
\newblock {Rearrangements of functions, maximization of convex
functionals, and vortex rings}.
\newblock {\em {Math. Ann.}} {\bf 276}, 225--253 (1987).

\bibitem{GRB:VP}
{\sc G.R. Burton.}
\newblock {Steady symmetric vortex pairs and rearrangements}.
\newblock {\em {Proc. Roy. Soc. Edinburgh Sect. A}}. {\bf 108}, 269--290 (1988).

\bibitem{GRB:Lamb}
{\sc G.R. Burton.}
\newblock {Isoperimetric properties of Lamb's circular vortex-pair}.
\newblock {\em {J. Math. Fluid Mech.}} {\bf 7}, S68--S80 (2005).

\bibitem{GRB:stab}
{\sc G.R. Burton.}
\newblock {Global nonlinear stability for ideal fluid flow in
bounded planar domains}.
\newblock{\em {Arch. Rational Mech. Anal.}} {\bf 176}, 149--163 (2005).


\bibitem{D:Z} 
{\sc G. Deem and N. Zabusky.} 
\newblock {Stationary {\em V}-states, interactions, recurrence and breaking}. 
\newblock {In: \em Solitons in
Action. Lonngren, Scott, (eds). Proceedings of the Workshop in Solitons, Redstone Arsenal.} 1977, 
\newblock { NewYork, Academic Press.}  277--293.

\bibitem{RJDiP:PLL}
{\sc R.J. DiPerna and P.-L. Lions.}
\newblock {Ordinary differential equations, transport theory and
Sobolev spaces}.
\newblock {\em Invent. Math.} {\bf 98}, 511--547 (1989)

\bibitem{RJD:UD}
{\sc R.J. Douglas.}
\newblock {Rearrangements of functions on unbounded domains}.
\newblock {\em {Proc. Roy. Soc. Edinburgh Sect. A}}
{\bf 124}, 621--644 (1994).


\bibitem{D:S}
{\sc J. Duc and J. Sommeria. }
\newblock {Experimental characterization of steady two dimensional vortex couples.}
\newblock {\em J. Fluid Mech.} {\bf 192}, 175--192 (1988).

\bibitem{Lamb}
{\sc H. Lamb.}
\newblock {Hydrodynamics}.
\newblock {Cambridge Univ. Press, 6th edition, 1932.}

\bibitem{Lieb:Choq}
{\sc E.H. Lieb}
\newblock {Existence and uniqueness of the minimising solution of
Choquard's nonlinear equation.}
\newblock {\em Studies in Appl. Math.} {\bf 57}, 93--105 (1977).
  
\bibitem{PLL:CC1}
{\sc P.-L. Lions.}
\newblock {The concentration-compactness principle in the calculus of
variations. The locally compact case. I}.
\newblock {\em Ann. Inst. H. Poincar\'{e} Anal. Non Lin\'{e}aire}
{\bf 1}, 109--145 (1984).

\bibitem{LNT00} 
{\sc M.C. Lopes Filho, H.J. Nussenzveig Lopes and E. Tadmor.}
\newblock{Approximate solutions of the incompressible
Euler equations with no concentrations.}
\newblock{\em Ann. Inst. Henri Poincar\'{e}, Analyse non lin\'eaire} {\bf 17}, 371--412 (2000).

\bibitem{L2X01}
{\sc M.C. Lopes Filho, H.J. Nussenzveig Lopes and Zhouping Xin.}
\newblock{Existence of vortex sheets with reflection symmetry in two space dimensions}
\newblock{\em Arch. Rational Mech. Anal.} {\bf 158}, 235--257 (2001).

\bibitem{AJM:ALB}
{\sc A.J. Majda and A.L. Bertozzi.}
\newblock {Vorticity and Incompressible Flow.}
\newblock {Cambridge University Press, Cambridge 2002.}

\bibitem{CM:MP}
{\sc C. Marchioro and M. Pulvirenti.}
\newblock {Some considerations on the nonlinear stability of
stationary planar Euler flows.}
\newblock {\em Commun. Math. Phys.} {\bf 100}, 343--354 (1985).


\bibitem{MLN:Enstrf}
{\sc A.L. Mazzucato, M.C. Lopes Filho and H.J. Nussenzveig Lopes.}
\newblock {Weak solutions, renormalized solutions and enstrophy defects in 2D turbulence}.
\newblock {\em Arch. Rational Mech. Anal.} {\bf 179}, 353--387 (2006).

\bibitem{Moffatt}
{\sc H.K. Moffatt.} 
\newblock {Structure and stability of solutions of the Euler equations: A Lagrangian approach.}
\newblock {\em Philos. Trans. Roy. Soc. London Ser. A} {\bf  333}, 321--342 (1990).

\bibitem{Norbury}
{\sc J. Norbury.}
\newblock {Steady planar vortex pairs in an ideal fluid}. 
\newblock {\em Comm. Pure Appl. Math.} {\bf 28}, 679--700 (1975).
 
\bibitem{oz82} 
{\sc E. Overman and N. Zabusky.}
\newblock {Coaxial scattering of Euler equation translating {\em V}-states via contour dynamics}.
\newblock {\em J. Fluid Mech.} {\bf 125}, 187--202 (1982).


\bibitem{Pierrehumbert}
{\sc R. Pierrehumbert.} 
\newblock {A family of steady, translating vortex pairs with distributed vorticity}. 
\newblock {\em J. Fluid Mech.} {\bf 99}, 129--144 (1980).

\bibitem{Pocklington}
{\sc H.C. Pocklington.}
\newblock {The configuration of a pair of equal and opposite hollow and
straight vortices of finite cross-section, moving steadily through fluid.}
\newblock {\em Proc. Cambridge Philos. Soc.} {\bf 8}, 178--187 (1895).


\bibitem{pullin92} 
{\sc D. Pullin.}
\newblock {Contour Dynamics Methods.}
\newblock {\em Annual review of fluid mechanics.} {\bf 24}, 89--115 (1992).

\bibitem{saffman95} 
{\sc P. G. Saffman.}
\newblock {Vortex dynamics}. 
\newblock {Cambridge Monographs on Mechanics and Applied Mathematics.}
Cambridge University Press, New York, 1992.

\bibitem{Kelvin}
{\sc W. Thomson (Lord Kelvin).}
\newblock {Maximum and minimum energy in vortex motion.}
\newblock {\em Nature} {\bf 22}, no. 574, 618--620 (1880);
\newblock {\em Mathematical and Physical Papers 4}, 172--183,
Cambridge University Press, Cambridge, 1910.

\bibitem{Yang91} 
{\sc Jianfu Yang}
\newblock {On the existence of steady planar vortices}
\newblock {\em Ann. Univ. Ferrara Sez. VII (N. S.)} {\bf 37}, 111--129 (1991).

\bibitem{YK94} 
{\sc J. Yang and T. Kubota}
\newblock {The steady motion of a symmetric, finite core size, counterrotating
vortex pair.}
\newblock {\em SIAM J. Appl. Math.} {\bf 54}, 14--25 (1994).

\bibitem{YHW:MP}
{\sc Y.H Wan and M. Pulvirenti.}
\newblock {Nonlinear stability of circular vortex patches.}
\newblock {\em Commun. Math. Phys.} {\bf 99}, 435--450 (1985).

\bibitem{VIY}
{\sc V.I. Yudovich.}
\newblock {Non-stationary flow of an ideal incompressible liquid}.
\newblock {\em U.S.S.R. Comput. Math. and Math. Phys.} {\bf 3}, 1407-1456 (1963).
\newblock {Translation of:}
\newblock {\em Zh. Vychisl. Mat. i Mat. Fiz.} {\bf 6}, 1032--1066 (1963).


\end{thebibliography}

\noindent
{\large\bf Authors' addresses.}

\noindent
G.R Burton\\
Department of Mathematical Sciences\\
University of Bath\\
Claverton Down\\
Bath BA2 7AY\\
United Kingdom.\\
E-mail: G.R.Burton@bath.ac.uk
\bigskip

\noindent
H.J. Nussenzveig Lopes and M.C. Lopes Filho\\
Instituto de Matem\'{a}tica\\
Universidade Federal do Rio de Janeiro\\
Cidade Universit\'aria -- Ilha do Fund\~ao\\
Caixa Postal 68530\\
21941-909 Rio de Janeiro, RJ -- BRASIL.\\
E-mail: hlopes@im.ufrj.br (H.J. Nussenzveig Lopes)\\
mlopes@im.ufrj.br (M.C. Lopes Filho).
\end{document}